\def\ps@pprintTitle{%
 \let\@oddhead\@empty
 \let\@evenhead\@empty
 \def\@oddfoot{}%
 \let\@evenfoot\@oddfoot}
\def\printFirstPageNotes{%
  \iflongmktitle
   \let\columnwidth=\textwidth\fi
  \ifx\@tnotes\@empty\else\@tnotes\fi
  \ifx\@nonumnotes\@empty\else\@nonumnotes\fi
  \ifx\@cornotes\@empty\else\@cornotes\fi
  \ifx\@elseads\@empty\relax\else
   \let\thefootnote\relax
   \footnotetext{\ifnum\theead=1\relax
      \textit{E-Mail:\space}\else
      \textit{E-Mail:\space}\fi
     \@elseads}\fi
  \ifx\@elsuads\@empty\relax\else
   \let\thefootnote\relax
   \footnotetext{\textit{URL:\space}%
     \@elsuads}\fi
  \ifx\@fnotes\@empty\else\@fnotes\fi
  \iflongmktitle\if@twocolumn
   \let\columnwidth=\Columnwidth\fi\fi
}
\newdefinition{definition}{Definition}
\newtheorem{theorem}[definition]{Theorem}
\newtheorem{lemma}[definition]{Lemma}
\newdefinition{remark}[definition]{Remark}
\newdefinition{exmp}[definition]{Example}
\newdefinition{algorithm}[definition]{Algorithm}
\newtheorem{problem}[definition]{Approximation problem}
\newtheorem{assertion}[definition]{Assertion}
\newproof{proof}{Proof}
\newproof{prooftheorem}{Proof of Theorem \thetheorem}
\renewenvironment{proof}{{\bfseries Proof.}}{}
\renewenvironment{prooftheorem}{{\bfseries Proof}}{}
\begin{document}

\begin{center}
{\Large{}Discrete approximation by first-degree splines with free knots}
\end{center}

\vspace{0.2cm}
\begin{center}\setlength{\footnotemargin}{0.5em}
{\large Ludwig J. Cromme\footnote{Numerische und Angewandte Mathematik, BTU Cottbus-Senftenberg, Platz der Deutschen Einheit 1, D-03046 Cottbus, Germany, E-mail: Ludwig.Cromme@b-tu.de\rule[-8pt]{0pt}{5pt}} and Jens Kunath\footnote{Numerische und Angewandte Mathematik, BTU Cottbus-Senftenberg, Platz der Deutschen Einheit 1, D-03046 Cottbus, Germany}}
\end{center}



\vspace{0.2cm}
\centerline{\hrulefill}
\vspace{-0.1cm}
\begin{flushleft}
\textbf{Abstract}
\end{flushleft}

\vspace{-0.2cm}\noindent
This paper deals with the approximation of discrete real-valued functions by first-degree splines (broken lines) with free knots for arbitrary $L_p$-norms ($1 \leq p \leq \infty)$. We prove the existence of best approximations und derive statements on the position of the (free) knots of a best approximation. Building on this, elsewhere we develop an algorithm to determine a (global) best approximation in the $L_2$-norm.

\vspace{0.25cm}
\noindent\textit{Key words}: splines of degree one, broken lines, splines with free knots, best approximation, discrete approximation, $L_p$-approximation ($1 \leq p \leq \infty$), existence theorem, first-degree splines

\centerline{\hrulefill}


\section{Introduction and problem formulation}
\label{sec:intro}
First-degree splines (broken lines) with fixed knots have a series of notable properties: In interpolation they preserve convexity, positivity and monotonicity and are cheap to calculate (see for example \cite{kocic1997}). First-degree splines can also help fight the dreaded Gibbs phenomenon (see for example \cite{richards1991}). Broken lines are easily smoothed should smooth curves be needed (see for example \cite{koutsoyiannis2000}).

If approximation with a minimal number of knots of knots is wanted or the knots have a meaning from the user perspective, interpolation does not suffice and the knots must be positioned optimally. Thus, we have an approximation problem to solve.

In approximating a real-valued function of a real variable by first-degree splines with free knots, the existence of a best approximation in the continuous case is guaranteed (see for example Rice \cite{rice} for splines of degree $m \in \mathbb{N}$). Globally convergent numerical methods are not known.

In this paper the existence of a best approximation by first-degree splines with free knots for the \textit{discrete} case is shown and proof given for characteristic properties of a best approximation. A globally convergent numerical method based on this is derived in \cite{cromme01}.

Difficulties stem among others from the fact that a minimizing sequence bounded on a discrete domain can be unlimited if we consider the related sequence of continuous functions.

To begin with, we summarize the most important notations: Let $\mathbb{P}_m$ denote the real polynomials of degree smaller or equal $m$. Let $[a,b]$ be a real interval with $a < b$. For $a =: t_0 < t_1 < \ldots < t_k < t_{k+1} := b$ and $m, k \in \mathbb{N}$ 
\begin{equation*}
S^m(t_1,\ldots,t_k) := \left\{ \left. s \in C^{m-1}[a,b] \;\;\right|\;\; s|_{\left(t_j,t_{j+1}\right)} \in \mathbb{P}_m \;,\; j = 0,1,\ldots,k \right\}
\end{equation*} 
denotes the set of splines of degree $m$ with $k$ fixed (simple) knots $t_1,\ldots,t_k$. By the splines of degree $m \in \mathbb{N}$ with at most $k \in \mathbb{N}$ free (simple) knots we mean the set
\begin{eqnarray*}
S^m_k[a,b] &:=& \text{\Large $\left\{\right.$} s\in C^{m-1}[a,b] \;\; | \;\; \text{there exist points} \\
&& a =: t_0 < t_1 < \ldots < t_k < t_{k+1} := b\;\\ 
&& \text{with:} \;\; s|_{\left(t_i,t_{i+1}\right)} \in \mathbb{P}_m \;\;\text{for} \;\; i=0,1,\ldots,k  \text{\Large $\left.\right\}$} \quad.
\end{eqnarray*} 
Here, we call $t_j$ a \textit{proper} (or \textit{active}) knot of $s\in S^m(t_1,\ldots,t_k)$ or $s\in S^m_k[a,b]$ if the $m$-th derivate of $s$ has a jump discontinuity in $t_j$, otherwise $t_j$ is called \textit{improper} (or \textit{inactive}) knot.

\vspace{0.05in}
$S^m_k[a,b]$ is not closed. In $S^m_k[a,b]$ generally hence there exists no best approximation for a given $f\in C[a,b]$. Such is only assured in the closure of $S^m_k[a,b]$, i.e., if knots are allowed to coalesce, see Rice (\cite{rice}, Theorem 10-2). Hence, it is a feature of discrete approximation with broken lines that there are best approximations already in $S^1_k[a,b]$ as we will demonstrate below.

\vspace{0.05in}
Let $a =: x_0 < x_1 < \ldots < x_{\mu+1} := b$ be $\mu+2$ real abscissae and $f_0,\ldots,f_{\mu+1} \in \mathbb{R}$ values of a real function $f:[a,b] \rightarrow \mathbb{R}$. We define vectors $X := \left(x_0,\ldots,x_{\mu+1}\right)^t$, $F := \left(f_0,\ldots,f_{\mu+1}\right)^t$ and for arbitrary $g: \left\{x_0,\ldots,x_{\mu+1}\right\} \rightarrow \mathbb{R}$ set $g(X) := \left(g(x_0),\ldots,g(x_{\mu+1})\right)^t$. With the vector norm $\left\|\cdot\right\|_p$, $1 \leq p \leq \infty$, our approximation problem then reads:
\begin{problem}\label{defApproxproblem}
Determine $s^{\ast} \in S^1_k[a,b]$ with 
\begin{equation*}
\left\|f - s^{\ast}\right\|_{p,X} \;:=\; \left\|F-s^{\ast}(X)\right\|_p \;=\; \inf_{s \in S^1_k[a,b]} \left\|F-s(X)\right\|_p \quad.
\end{equation*}
\end{problem}

In what follows, we assume generally that $k \geq 1$ and $\mu \geq k+1 \geq 2$ since otherwise we are looking for a purely polynomial approximation or else the data can be reproduced exactly in $S^1_k[a,b]$.

\vspace{0.1in}
\begin{definition}\label{defMinimalfolge}
By a minimizing sequence of Problem \ref{defApproxproblem} we mean a sequence $\left(s^{(i)}\right)_{i\in\mathbb{N}}$ with $s^{(i)} \in S^1_k[a,b]$ and
\begin{equation*}
\left\|f-s^{(i)}\right\|_{p,X} \;\longrightarrow\; \inf_{s \in S^1_k[a,b]} \left\|f-s\right\|_{p,X}
\end{equation*}
for $i \rightarrow \infty$.
\end{definition}


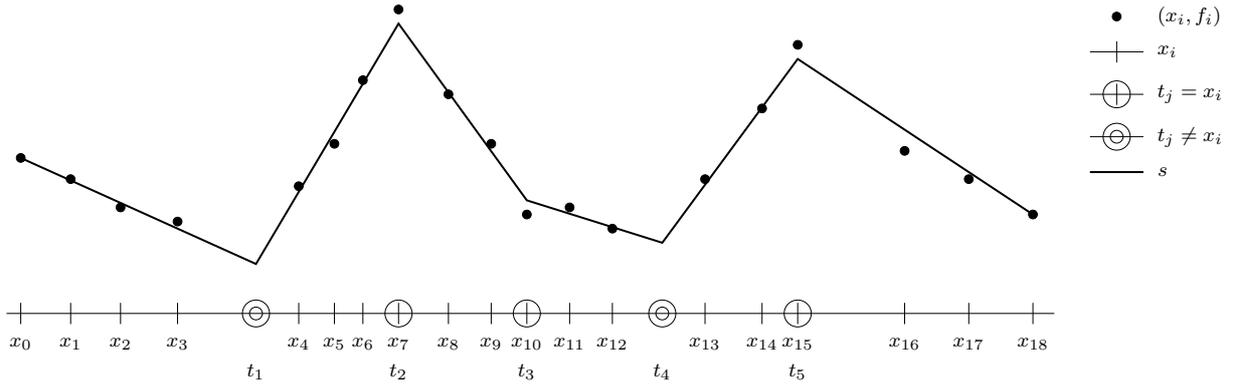
\begin{figure}[h]
\begin{center}
\resizebox{1.0\textwidth}{!}{
\begin{tikzpicture}
\draw (-0.2,0)--(14.5,0);
\foreach \x/\y in {0/0,0.7/1,1.4/2,2.2/3,3.9/4,4.4/5,4.8/6,5.3/7,6/8,6.6/9,7.1/10,7.7/11,8.3/12,9.6/13,10.4/14,10.9/15,12.4/16,13.3/17,14.2/18} \draw (\x,0.15)--(\x,-0.15) node[below=2pt] {\footnotesize $x_{\y}$};
\draw (3.3,0) circle (0.09cm);
\draw (3.3,0) circle (0.19cm) node[below=0.6cm] {\footnotesize $t_1$};
\draw (5.3,0) circle (0.19cm) node[below=0.6cm] {\footnotesize $t_2$};
\draw (7.1,0) circle (0.19cm) node[below=0.6cm] {\footnotesize $t_3$};
\draw (9,0) circle (0.09cm);
\draw (9,0) circle (0.19cm) node[below=0.6cm] {\footnotesize $t_4$};
\draw (10.9,0) circle (0.19cm) node[below=0.6cm] {\footnotesize $t_5$};

\draw[thick,smooth] (0,2.2)--(3.3,0.7)--(5.3,4.1)--(7.1,1.6)--(9,1)--(10.9,3.6)--(14.2,1.4);

\foreach \x/\y in {0/2.2,0.7/1.9,1.4/1.5,2.2/1.3,3.9/1.8,4.4/2.4,4.8/3.3,5.3/4.3,6/3.1,6.6/2.4,7.1/1.4,7.7/1.5,8.3/1.2,9.6/1.9,10.4/2.9,10.9/3.8,12.4/2.3,13.3/1.9,14.2/1.4} \draw[fill] (\x,\y) circle (1.75pt);

\draw[fill](15.375,4.2) circle (1.75pt);
\draw (15.75,4.2) node[right=2pt] {\footnotesize $(x_i,f_i)$};
\draw (15,3.7)--(15.75,3.7) node[right=2pt] {\footnotesize $x_i$};
\draw (15.375,3.85)--(15.375,3.55);
\draw (15,3.1)--(15.75,3.1) node[right=2pt] {\footnotesize $t_j = x_i$};
\draw (15.375,3.25)--(15.375,2.95);
\draw (15.375,3.1) circle (0.19cm);
\draw (15,2.5)--(15.75,2.5) node[right=2pt] {\footnotesize $t_j\neq x_i$};
\draw (15.375,2.5) circle (0.09cm);
\draw (15.375,2.5) circle (0.19cm);
\draw[thick] (15,2.0)--(15.75,2.0) node[right=2pt] {\footnotesize $s$};

\end{tikzpicture}
} 
\end{center}
\caption{A first-degree spline (broken line) $s \in S^1_5[a,b]$ as approximation to the $\mu+2=19$ data $(x_0,f_0),\ldots,(x_{18},f_{18})$. $s$ has $k=5$ simple knots $t_1,\ldots,t_5$ and the boundary knots $t_0 := a := x_0$ and $t_6 := b := x_{18}$.}
\label{fig:fig1}
\end{figure}

\vspace{0.25in}
\setcounter{equation}{0}
\section{Existence theorem and other properties}
\label{sec:mainsection}
Our objective in this section is to demonstrate the existence of a best approximation in $S^1_k[a,b]$ as solution of Problem \ref{defApproxproblem}. It features only simple knots, is continuous, and has other properties important for the numerical calculation. We pave the way for the summarizing Theorem \ref{satzEigenschaftenBesteApprox} in several steps.

\begin{lemma}\label{minimalfolgenlemma} 
There is a minimizing sequence with bounded function values and derivatives on the interval $[a,b]$. More precisely: There is a constant $M > 0$ and a minimizing sequence $s^{(i)} \in S^1_k[a,b]$ with
\begin{equation*}
\left\|f-s^{(i)}\right\|_{p,X} \;\longrightarrow\; \inf_{s \in S^1_k[a,b]} \left\|f-s\right\|_{p,X}
\end{equation*}
for $i \rightarrow \infty$ where
\begin{equation}\label{eq:gleichungA} 
\left|s^{(i)}(x)\right| \leq M \;\;\text{and}\;\; \left|\frac{d}{dx} s^{(i)}(x)\right| \leq M \;\;\text{for all}\;\; x \in [a,b] \quad.
\end{equation}
Here, the upper estimates for the derivative in (\ref{eq:gleichungA}) in the knots of $s^{(i)}$ hold for the left- und right-hand derivatives.
\end{lemma}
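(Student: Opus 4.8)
The plan is to start from an arbitrary minimizing sequence and repair each member so that it satisfies (\ref{eq:gleichungA}) while keeping its values at the abscissae --- hence its error, hence the minimizing property --- unchanged. First I would record that the values at the abscissae are already under control: for any minimizing sequence $s^{(i)}$ one has $\|s^{(i)}(X)\|_p \le \|F\|_p + \|F - s^{(i)}(X)\|_p$, and the right-hand side is bounded, so $|s^{(i)}(x_j)| \le C$ for a constant $C$ independent of $i$ and $j$. Since a slope bound $|\frac{d}{dx}s^{(i)}| \le M$ together with $|s^{(i)}(a)| = |s^{(i)}(x_0)| \le C$ already forces $|s^{(i)}(x)| \le C + M(b-a)$ on all of $[a,b]$, the whole lemma reduces to bounding the piece slopes of a suitably modified minimizing sequence by a constant depending only on $C$, on $\delta := \min_j (x_{j+1}-x_j)$, on $k$ and on $b-a$. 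This is exactly the difficulty flagged in the introduction: boundedness at the abscissae says nothing about the slopes, since a knot pressed against an abscissa or against the boundary --- or a whole chain of knots crammed into one abscissa-free gap --- can force arbitrarily steep segments.

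For the repair, fix one spline $s = s^{(i)}$ with values $v_j := s(x_j)$, $|v_j| \le C$. Its at most $k$ active knots cut $[a,b]$ into at most $k+1$ affine pieces, so the abscissae split into at most $k+1$ consecutive blocks on each of which the data $(x_j,v_j)$ are collinear. I call a piece \emph{rich} if it carries at least two abscissae and \emph{poor} otherwise. A rich piece lies on a carrier line whose slope is a chord slope of two data values over a gap $\ge \delta$, hence is bounded by $2C/\delta$; where two rich blocks meet I keep, just as in $s$ itself, the single knot at the intersection of their carrier lines (which sat in the straddling gap already for $s$), so this bounded ``tent'' needs no change. The steep segments can therefore come only from poor pieces, whose slope the data leave entirely free. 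On each maximal run of consecutive poor pieces I replace $s$ by the piecewise-linear interpolant of the abscissae lying in that run --- a single chord when the run carries no abscissa at all. Every slope so produced is again a chord slope of two data values over a gap $\ge \delta$, hence $\le 2C/\delta$, and no value $v_j$ is disturbed.

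The delicate point --- and the real heart of the argument --- is to perform these replacements without letting the number of active knots exceed $k$. The danger is genuine: flattening a gap that contained a \emph{single} interior knot can activate a knot at each of its two bounding abscissae, turning one knot into two, and poor pieces let such gaps chain across several abscissae. The intended mechanism is a bookkeeping argument that separates the knots of $s$ into the data-supported ones (the rich-block joins, kept and already bounded) and the free ones bordering poor pieces, and then charges each knot newly activated at an abscissa against a free knot deleted by the flattening. Making the net balance non-positive is where the work lies: a maximal flattened run interpolating $\rho$ abscissae deletes its interior knots but may create up to $\rho$ abscissa-knots, so the excess $+1$ has to be absorbed by showing that a bounding knot of the run becomes inactive (or by the boundary, when the run abuts $a$ or $b$). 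Carrying out this charging through all positional cases --- interior gaps, gaps abutting the boundary, and abscissae that happen to coincide with a knot --- is the lengthy technical part I expect to be the main obstacle.

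Finally, collecting the estimates, the constant $M := \max\{\,2C/\delta,\ C + (2C/\delta)(b-a)\,\}$ bounds both $|\hat s^{(i)}|$ and every piece slope uniformly in $i$; at a knot the left- and right-hand derivatives are simply the slopes of the two adjacent pieces and obey the same bound. Hence $(\hat s^{(i)})$ is a minimizing sequence satisfying (\ref{eq:gleichungA}), which proves the lemma.
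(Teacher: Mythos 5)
Your opening reductions are sound and in fact coincide with the paper's own first moves: boundedness of $s^{(i)}(x_j)$ along a minimizing sequence, the remark that a uniform slope bound plus one anchored value gives the bound on function values, and chord slopes over gaps between consecutive abscissae as the only available slope scale (the paper's constants $M'''$ and $M''''=M'+(b-a)M'''$ are your $2C/\delta$ and $C+(2C/\delta)(b-a)$ in different clothing). But the lemma is not proved, and you say so yourself: the step you defer --- carrying out the repair without ever exceeding $k$ active knots --- is not a lengthy technicality bolted onto the argument, it \emph{is} the argument. In the paper this is Assertion \ref{behauptungAbleitungBeschraenkt}, proved by a left-to-right induction over the intervals $[x_0,x_q]$ with a case analysis on the number of knots in each gap, and everything difficult happens in the case your plan cannot handle: exactly one proper knot strictly inside a gap whose endpoints are not knots.

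Worse, your proposed accounting cannot be made to work for the repair you chose, because in that critical configuration the repaired function itself fails to lie in $S^1_k[a,b]$. Take a single poor piece carrying one abscissa $x_q$, bounded by knots $t < x_q < t'$ with rich pieces on both sides (nearest data abscissae $x_{q-1}$, $x_{q+1}$), and let all remaining $k-2$ knots be rich-rich junctions, which your scheme keeps. Your replacement interpolates $(x_{q-1},v_{q-1})$, $(x_q,v_q)$, $(x_{q+1},v_{q+1})$: it deletes $t,t'$ and, for generic data, activates knots at all three abscissae, leaving $k+1$ active knots. There is no surviving ``bounding knot of the run'' to charge against --- both were erased --- so no bookkeeping closes the gap; the repair map must be changed, not the accounting. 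This is precisely where the paper does something different: since a proper knot $t$ forces $s^{(i)}$ off the chord $\sigma$ of $(x_{q-1},v_{q-1}),(x_q,v_q)$, the spline must bend back toward the data using a further knot in the \emph{next} gap $(x_q,x_{q+1}]$ (or else no repair is needed at all, because the slope left of $t$ continues the already-bounded slope from $[x_0,x_{q-1}]$ and the slope right of $t$ is a data chord slope). The paper's modification therefore extends $\sigma$, or other existing linear pieces, past $x_q$ up to an intersection point with the spline, consuming that extra knot so the count never rises (its cases 3.2.2.1.1 through 3.2.2.1.2.3, plus pushing the knot onto $x_\mu$ at the right boundary). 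This look-ahead into the neighbouring gap, with line extensions to intersection points instead of data interpolation, is the missing idea; without it your plan stalls exactly where you predicted it would.
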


\vspace{0.1in}
\begin{remark}\label{minimalfolgenbeispiel} 
The conclusions in Lemma \ref{minimalfolgenlemma} are not obvious as can be seen from the following example of a minimizing sequence: Despite boundedness in the $x_j$ (here: $-1,0,1$) the $s^{(i)}$ can have unbounded functions values and derivatives (in $t_2^{(i)} = 0.5$ in this example). The function
\begin{equation*}
s^{(i)}(x) \;:=\; \left\{
\begin{array}{lcl}
-x & , & x \leq -\frac{1}{i} \\
1 + (i-1)x & , & -\frac{1}{i} < x \leq \frac{1}{2} \\
i + (1-i)x & , & x > \frac{1}{2}
\end{array}
\right.
\end{equation*}
is from $S_2^1[-1,1]$ and has the function value $s^{(i)}\left(x_j\right) = 1$  in $x_0 := -1$, $x_1 := 0$ and $x_2 := 1$. But the sequence $\left(s^{(i)}\right)_{i \in \mathbb{N}}$ is not bounded on $[-1,1]$ since $s^{(i)}\left(t_2^{(i)}\right) = s^{(i)}\left(\frac{1}{2}\right) = \frac{1}{2}(i+1)$; see Fig. \ref{fig2}.
\end{remark}

\begin{figure}[h]
\begin{center}
\resizebox{0.80\textwidth}{!}{
\begin{tikzpicture} 
\draw[->,color=black,>=stealth] (-0.2,0) -- (12.5,0) node[below=2pt] {\normalsize $x$};
\draw[->,color=black,>=stealth] (0,-0.2) -- (0,7.2); 
\draw (0,0-.2)--(0,0) node[below=6pt]{\normalsize -1}; 
\draw (1.5,0-.1)--(1.5,0);
\draw (3,0-.2)--(3,0) node[below=6pt]{\normalsize -0.5};  
\draw (4.5,0-.1)--(4.5,0);
\draw (6,0-.2)--(6,0) node[below=6pt]{\normalsize 0};  
\draw (7.5,0-.1)--(7.5,0);
\draw (9,0-.2)--(9,0) node[below=6pt]{\normalsize 0.5};
\draw (10.5,0-.1)--(10.5,0); 
\draw (12,0-.2)--(12,0) node[below=6pt]{\normalsize 1};
\draw (0-.2,0)--(0,0) node[left=6pt]{\normalsize 0};
\draw (0-.1,0.31818)--(0,0.31818); 
\draw (0-.2,0.63636)--(0,0.63636) node[left=6pt]{\normalsize 0.5}; 
\draw (0-.1,0.95455)--(0,0.95455); 
\draw (0-.2,1.2727)--(0,1.2727) node[left=6pt]{\normalsize 1};  
\draw (0-.1,1.5909)--(0,1.5909);
\draw (0-.2,1.9091)--(0,1.9091) node[left=6pt]{\normalsize 1.5};
\draw (0-.1,2.2273)--(0,2.2273);
\draw (0-.2,2.5455)--(0,2.5455) node[left=6pt]{\normalsize 2};  
\draw (0-.1,2.8636)--(0,2.8636);
\draw (0-.2,3.1818)--(0,3.1818) node[left=6pt]{\normalsize 2.5};
\draw (0-.1,3.5)--(0,3.5);
\draw (0-.2,3.8182)--(0,3.8182) node[left=6pt]{\normalsize 3};  
\draw (0-.1,4.1364)--(0,4.1364);
\draw (0-.2,4.4545)--(0,4.4545) node[left=6pt]{\normalsize 3.5};
\draw (0-.1,4.7727)--(0,4.7727);
\draw (0-.2,5.0909)--(0,5.0909) node[left=6pt]{\normalsize 4};  
\draw (0-.1,5.4091)--(0,5.4091);
\draw (0-.2,5.7273)--(0,5.7273) node[left=6pt]{\normalsize 4.5};
\draw (0-.1,6.0455)--(0,6.0455);
\draw (0-.2,6.3636)--(0,6.3636) node[left=6pt]{\normalsize 5};  
\draw (0-.1,6.6818)--(0,6.6818);
\draw (0-.2,7)--(0,7) node[left=6pt]{\normalsize 5.5}; 
\color{black} 
\draw[thin] (0,1.2727)--(3,0.63636)--(9,1.9091)--(12,1.2727); 
  
\draw[thick, dotted] (0,1.2727)--(4,0.42424)--(9,2.5455)--(12,1.2727);
  
\draw[thick,dashed] (0,1.2727)--(4.5,0.31818)--(9,3.1818)--(12,1.2727);
  
\draw[thick,densely dotted] (0,1.2727)--(4.8,0.25455)--(9,3.8182)--(12,1.2727);
  
\draw[very thick] (0,1.2727)--(5.4,0.12727)--(9,7)--(12,1.2727); 

\draw[thin] (13,7)--(14,7) node[right=2pt] {\large $s^{(2)}$}; 
\draw[thick, dotted] (13,6.25)--(14,6.25) node[right=2pt] {\large $s^{(3)}$}; 
\draw[thick,dashed] (13,5.5)--(14,5.5) node[right=2pt] {\large $s^{(4)}$}; 
\draw[thick,densely dotted] (13,4.75)--(14,4.75) node[right=2pt] {\large $s^{(5)}$}; 
\draw[very thick] (13,4)--(14,4) node[right=2pt] {\large $s^{(10)}$};
  
\end{tikzpicture}
} 
\end{center}
\caption{A few terms of the sequence $\left(s^{(i)}\right)_{i \in \mathbb{N}}$ from Remark \ref{minimalfolgenbeispiel}. The $s^{(i)}$ are bounded on a discrete set (here: $-1,0,1$) yet unbounded in between (here: $0.5$).}
\label{fig2}
\end{figure}
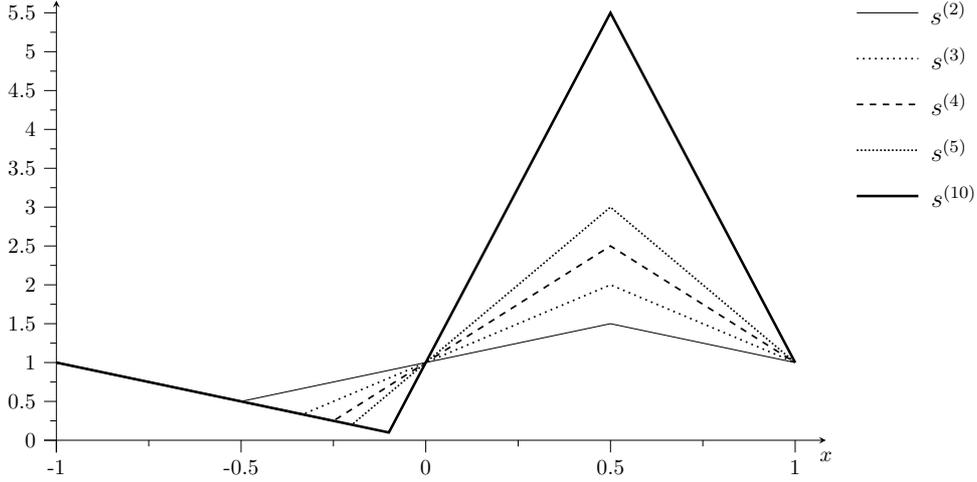

\begin{prooftheorem}
\textbf{of Lemma \ref{minimalfolgenlemma}:} Let $s^{(i)} \in S^1_k[a,b]$, $i \in \mathbb{N}$, be the terms of a minimizing sequence. Without loss of generality all knots of the $s^{(i)}$ are proper knots with discontinuous first derivatives. We will construct a new sequence with the claimed properties and with unnecessary oscillations ironed out. In this process function values in the data abscissae $s^{(i)}\left(x_j\right)$, $0 \leq j \leq \mu+1$, remain \textit{unchanged} such that the approximation power of the original $s^{(i)}$ on $X$ and thus the property of being a minimizing sequence is preserved. 

\vspace{0.05in}
Since $\left(s^{(i)}\right)_{i \in \mathbb{N}}$ is a minimizing sequence we have
\begin{equation}\label{eq:gleichungB} 
M' \;:=\; \sup_{\genfrac{}{}{0pt}{}{l \in \mathbb{N}}{0 \leq j \leq \mu+1}} \left|s^{(l)}\left(x_j\right)\right| \;<\; \infty \quad.
\end{equation}

We set
\begin{eqnarray} 
M_l'' &:=& \max_{1 \leq q \leq \mu+1} \left|\frac{s^{(l)}\left(x_q\right) - s^{(l)}\left(x_{q-1}\right)}{x_q-x_{q-1}}\right|, \label{eq:gleichungC} \\
M''' &:=& \sup_{l \in \mathbb{N}} M_l'',  \label{eq:gleichungD} \\
M'''' &:=& M' \;+\; (b-a) M''' \;\;\;\;\;\text{and}  \label{eq:gleichungE} \\
M &:=& \max\left(M''',M''''\right) \quad.  \label{eq:gleichungF}
\end{eqnarray}
We will show that $M'''$ is an upper bound for the absolute values of the derivatives and $M''''$ for the absolute values of a (suitably redefined, if necessary) minimizing sequence on $[a,b]$. (\ref{eq:gleichungB}) implies finiteness of the values (\ref{eq:gleichungC}) to (\ref{eq:gleichungF}), in particular $M < \infty$. We start the proof with the following
\begin{assertion}\label{behauptungAbleitungBeschraenkt} 
Without loss of generality the absolute values of the derivatives of the $s^{(i)}$ are bounded by $M_i''$:
\begin{equation}\label{eq:gleichungG} 
\left|\frac{d}{dx} s^{(i)}(x)\right| \;\leq\; M_i'' \;\;\;\text{for all}\;\;\; x \in [a,b] \quad.
\end{equation}
\end{assertion}
\begin{proof}
We prove Assertion \ref{behauptungAbleitungBeschraenkt} by induction for the interval $\left[x_0,x_q\right]$, $q = 1,2,\ldots$, $\mu+1$. If necessary, $s^{(i)}$ is changed such that estimate (\ref{eq:gleichungG}) holds for the interval $[x_0,x_q]$. The function values $s^{(i)}(x_j)$ remain unchanged and the property of being a minimizing sequence and (\ref{eq:gleichungB}) to (\ref{eq:gleichungF}) are untouched. For simplicity the possibly modified functions are denoted $s^{(i)}$ again.  

\vspace{0.05in}
For the \underline{induction basis}, this means the interval $\left[x_0,x_1\right]$, we have to check two cases:

\vspace{0.03in}
\underline{Case 1:} $s^{(i)}$ has at least one knot and thus $t_1^{(i)}\in\left(x_0,x_1\right)$. Then we move $t_1^{(i)}$ to $x_1$ and replace $s^{(i)}$ on $\left[x_0,x_1\right]$ by the straight line connecting the points $\left(x_0,s^{(i)}\left(x_0\right)\right)$ and $\left(x_1,s^{(i)}\left(x_1\right)\right)$. We denote the function thus defined by $s^{(i)}$ again for which the upper estimate from (\ref{eq:gleichungG}) now holds.

\begin{figure}[h]
\begin{center}
\begin{tikzpicture}
\draw (-2.5,0)--(3,0);
\draw (-2,0.1)--(-2,-0.1) node[below=4pt] {$x_0$};
\draw (0,0.1)--(0,-0.1) node[below=4pt] {$t_1^{(i)}$};
\draw (2.5,0.1)--(2.5,-0.1) node[below=4pt] {$x_1$};

\draw[fill] (2.5,2.1667) circle (1.5pt);

\draw[thick] (-2,1) to (0,0.5) to node[below=2pt] {$s_{\text{old}}^{(i)}$} (3,2.5);
\draw[thick,dashed] (-2,1) to node[above=2pt] {$s_{\text{new}}^{(i)}$} (2.5,2.1667);
\end{tikzpicture}
\end{center}
\caption{Proof of Assertion \ref{behauptungAbleitungBeschraenkt}, induction basis, case 1.}
\label{fig3}
\end{figure}
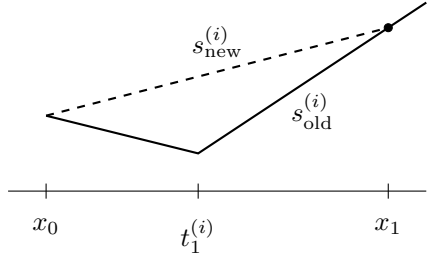

\vspace{0.1in}
\underline{Case 2:} $s^{(i)}$ has no knot in $\left(x_0,x_1\right)$. Therefore $s^{(i)}$ is a straight line on $\left[x_0,x_1\right]$ and the upper estimate (\ref{eq:gleichungG}) holds.

\vspace{0.1in}
For the \underline{induction step} let $s^{(i)}$ be a minimizing sequence for which estimate (\ref{eq:gleichungG}) holds on $\left[x_0,x_{q-1}\right]$. We show that a minimizing sequence exists for wich (\ref{eq:gleichungG}) holds on $\left[x_0,x_q\right]$. We have to consider several cases:

\vspace{0.03in}
\underline{Case 1:} $s^{(i)}$ does not have a knot in $\left(x_{q-1},x_q\right)$. Then $s^{(i)}$ is linear on $\left[x_{q-1},x_q\right]$ and (\ref{eq:gleichungG}) holds for all $x \in \left[x_0,x_q\right]$.

\vspace{0.03in}
\underline{Case 2:} $s^{(i)}$ has at least two knots $\left(x_{q-1},x_q\right)$. Replace $s^{(i)}$ on $\left[x_{q-1},x_q\right]$ by the straight line connecting $\left(x_{q-1},s^{(i)}\left(x_{q-1}\right)\right)$ and $\left(x_q,s^{(i)}\left(x_q\right)\right)$. We denote the function from $S_k^1[a,b]$ thus defined with $s^{(i)}$ again. The function values in all $x_j$ remain unchanged by this modification and (\ref{eq:gleichungG}) now holds for $s^{(i)}$ and all $x \in \left[x_0,x_q\right]$.

\vspace{0.06in}
\underline{Case 3:} $s^{(i)}$ has exactly one knot in $\left(x_{q-1},x_q\right)$.

\vspace{0.03in}
\underline{Case 3.1:} $x_{q-1}$ or $x_q$ are likewise knots of $s^{(i)}$. Then we replace $s^{(i)}$ on $\left[x_{q-1},x_q\right]$ by the straight line connecting $\left(x_{q-1},s^{(i)}\left(x_{q-1}\right)\right)$ and $\left(x_q,s^{(i)}\left(x_q\right)\right)$. The function thus constructed (denoted by $s^{(i)}$ again) meets estimate (\ref{eq:gleichungG}) on $\left[x_0,x_q\right]$.

\vspace{0.03in}
\underline{Case 3.2:} The knot $t_j^{(i)} \in \left(x_{q-1},x_q\right)$ is the only knot of $s^{(i)}$ in the interval $\left[x_{q-1},x_q\right]$.

\vspace{0.03in}
We define the straight line connecting the points $\left(x_{q-1},s^{(i)}\left(x_{q-1}\right)\right)$ and $\left(x_q,s^{(i)}\left(x_q\right)\right)$
\begin{equation*}
\sigma(x) \;:=\; s^{(i)}\left(x_{q-1}\right) + \left(x - x_{q-1}\right) \frac{s^{(i)}\left(x_q\right)-s^{(i)}\left(x_{q-1}\right)}{x_q - x_{q-1}}
\end{equation*}
and continue the distinction of cases with

\vspace{0.03in}
\underline{Case 3.2.1:} In this case $s^{(i)}\left(t_j^{(i)}\right) = \sigma\left(t_j^{(i)}\right)$. Then $s^{(i)}$ is a straight line on $\big[x_{q-1}-\varepsilon,x_q\big]$ for sufficiently small $\varepsilon > 0$ and the induction hypothesis extends from $\big[x_0,x_{q-1}\big]$ to $\big[x_0,x_q\big]$. 

\vspace{0.03in}
\underline{Case 3.2.2:} In this case $s^{(i)}\left(t_j^{(i)}\right) < \sigma\left(t_j^{(i)}\right)$.

\vspace{0.03in}
\underline{Case 3.2.2.1:} Subcase $q \leq \mu$.

\vspace{0.03in}
\underline{Case 3.2.2.1.1:} We consider the case $s^{(i)}\left(x_{q+1}\right) \leq \sigma\left(x_{q+1}\right)$. Since $s^{(i)}\left(t_j^{(i)}\right)$ lies strictly below the straight line $\sigma$, there must be a knot $t_{j+1}^{(i)} \in \left(x_q,x_{q+1}\right)$ in the following section. Furthermore, there is a $x \in \left(x_q,x_{q+1}\right]$ such that the point $\left(x,s^{(i)}(x)\right)$ also lies on the straight line $\sigma$. We replace $s^{(i)}$ on $\left[x_{q-1},x\right]$ by $\sigma$. Thus, the knots $t_j^{(i)}$ and $t_{j+1}^{(i)}$ vanish whereas $x_{q-1}$ and $x$ are new knots. We denote the function thus constructed by $s^{(i)}$ again. (\ref{eq:gleichungG}) now holds for $s^{(i)}$ on $\left[x_{q-1},x_q\right]$ by construction and because of the induction hypothesis therefore on $[x_0,x_q]$.

\begin{figure}[h]
\begin{center}
\begin{tikzpicture}
\draw (-2.5,0)--(3.5,0);
\draw (-2,0.1)--(-2,-0.1) node[below=2pt] {\small $x_{q-1}$};
\draw (-1,0.1)--(-1,-0.1) node[below=2pt] {\small $t_j^{(i)}$};
\draw (0,0.1)--(0,-0.1) node[below=2pt] {\small $x_q$};
\draw (1,0.1)--(1,-0.1) node[below=2pt] {\small $t_{j+1}^{(i)}$};
\draw (2,0.1)--(2,-0.1) node[below=2pt] {\small $x$};
\draw (3,0.1)--(3,-0.1) node[below=2pt] {\small $x_{q+1}$};

\draw[->,>=stealth] (-1.25,-0.35)--(-1.55,-0.35);
\draw[->,>=stealth] (1.25,-0.35)--(1.75,-0.35);

\draw[fill] (0,1.5) circle (1.5pt);
\draw[fill] (-2,1.5) circle (1.5pt);
\draw[fill] (3,0.5) circle (1.5pt);

\draw[thick] (-2.5,2)--(-1,0.5)--(1,2.5)--(3.25,0.25);
\draw[thick,dashed] (2,1.5)--(-2,1.5);

\draw (4,2.3)--(5,2.3) node[right=2pt] {\large $s^{(i)}_{\text{old}}$}; 
\draw[dashed] (4,1.55)--(5,1.55) node[right=2pt] {\large $s^{(i)}_{\text{new}}$}; 
\end{tikzpicture}
\end{center}
\caption{Illustration of the proof of Assertion \ref{behauptungAbleitungBeschraenkt}, case 3.2.2.1.1.}
\label{fig4}
\end{figure}
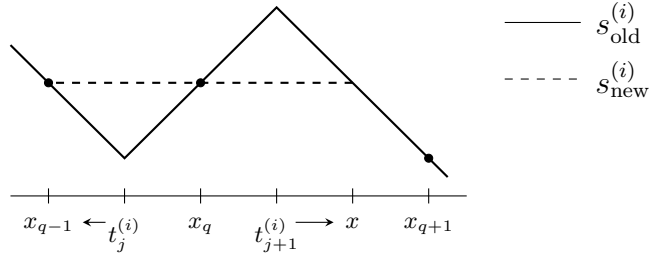

\vspace{0.1in}
\underline{Case 3.2.2.1.2:} In this case $s^{(i)}\left(x_{q+1}\right) > \sigma\left(x_{q+1}\right)$.

\vspace{0.03in}
\underline{Case 3.2.2.1.2.1:} $s^{(i)}$ has exactly one (proper) knot $t_{j+1}^{(i)}$ in $\left(x_q,x_{q+1}\right]$. We denote whith $\varphi$ the straight line with function value $s^{(i)}\left(x_q\right)$ and derivative $\frac{d}{dx}s^{(i)}\left(x_q\right)$ in $x_q$.

\vspace{0.03in}
\underline{Case 3.2.2.1.2.1.1:} In this case $s^{(i)}\left(x_{q+1}\right) < \varphi(x_{q+1})$. Let $x \in \left(x_{q-1},t_j^{(i)}\right)$ be the first (smallest) point of intersection of $s^{(i)}$ with the straight line running through the points $\left(x_q,s^{(i)}\left(x_q\right)\right)$ and $\left(x_{q+1},s^{(i)}\left(x_{q+1}\right)\right)$. We replace $s^{(i)}$ on $\left[x,x_{q+1}\right]$ by this straight line. Thus,  $x$ and $x_{q+1}$ become new knots while $t_j^{(i)}$ and $t_{j+1}^{(i)}$ are no longer knots. The function thus defined is denoted by $s^{(i)}$ again. Estimate (\ref{eq:gleichungG}) holds for $s^{(i)}$ on $\left[x_0,x_{q-1}\right]$ because of the induction hypothesis and on $\left[x_{q-1},x\right]$ because the derivative of $s^{(i)}$ in this interval equals the left-hand derivative of $s^{(i)}$ in $x_{q-1}$ (because of the induction hypothesis again). Finally, (\ref{eq:gleichungG}) holds on $\left[x,x_q\right]$ because $\frac{d}{dx}s^{(i)}(z) = \frac{s^{(i)}\left(x_{q+1}\right) - s^{(i)}\left(x_q\right)}{x_{q+1}-x_q}$ for all $z \in \left[x,x_q\right]$ which follows from the construction of $s^{(i)}$.

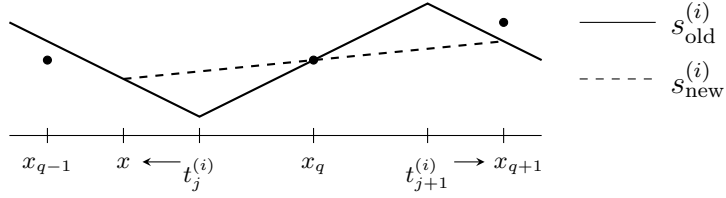
\begin{figure}[h]
\begin{center}
\begin{tikzpicture}
\draw (-4,0)--(3,0);
\draw (-3.5,0.1)--(-3.5,-0.1) node[below=2pt] {\small $x_{q-1}$};
\draw (-2.5,0.1)--(-2.5,-0.1) node[below=2pt] {\small $x$};
\draw (-1.5,0.1)--(-1.5,-0.1) node[below=2pt] {\small $t_j^{(i)}$};
\draw (0,0.1)--(0,-0.1) node[below=2pt] {\small $x_q$};
\draw (1.5,0.1)--(1.5,-0.1) node[below=2pt] {\small $t_{j+1}^{(i)}$};
\draw (2.5,0.1)--(2.5,-0.1);
\draw (2.7,-0.1) node[below=2pt] {\small $x_{q+1}$};

\draw[->,>=stealth] (-1.75,-0.35)--(-2.25,-0.35);
\draw[->,>=stealth] (1.85,-0.35)--(2.25,-0.35);

\draw[fill] (0,1) circle (1.5pt);
\draw[fill] (-3.5,1.0) circle (1.5pt);
\draw[fill] (2.5,1.5) circle (1.5pt);

\draw[thick] (-4,1.5)--(-1.5,0.25)--(1.5,1.75)--(3,1.0);
\draw[thick,dashed] (-2.5,0.75)--(2.5,1.25);

\draw (3.5,1.5)--(4.5,1.5) node[right=2pt] {\large $s^{(i)}_{\text{old}}$}; 
\draw[dashed] (3.5,0.75)--(4.5,0.75) node[right=2pt] {\large $s^{(i)}_{\text{new}}$}; 
\end{tikzpicture}
\end{center}
\caption{Illustration of the proof of Assertion \ref{behauptungAbleitungBeschraenkt}, case 3.2.2.1.2.1.1}
\label{fig5}
\end{figure}

\vspace{0.2in}
\underline{Case 3.2.2.1.2.1.2:} In this case $s^{(i)}\left(x_{q+1}\right) = \varphi(x_{q+1})$. Then $s^{(i)}$ is a straight line on $\big[x_q,x_{q+1}\big]$ and the only proper knot $t^{(i)}_{j+1}$ in $\big(x_q,x_{q+1}\big]$ must coincide with $x_{q+1}$. Estimate (\ref{eq:gleichungG}) holds even on $\big[x_0,x_{q+1}\big]$.

\vspace{0.03in}
\underline{Case 3.2.2.1.2.1.3:} Here we consider the  case $s^{(i)}\left(x_{q+1}\right) > \varphi(x_{q+1})$. Then there exists a point $x \in \left(x_q,t_{j+1}^{(i)}\right)$ as intersection of the straight line $\sigma$ connecting the points $\left(x_{q-1},s^{(i)}\left(x_{q-1}\right)\right)$ and $\left(x_q,s^{(i)}\left(x_q\right)\right)$ with the straight line connecting the points $\left(t_{j+1}^{(i)},s^{(i)}\left(t_{j+1}^{(i)}\right)\right)$ and $\left(x_{q+1},s^{(i)}\left(x_{q+1}\right)\right)$. We replace $s^{(i)}$ on $\left(x_{q-1},x\right)$ by $\sigma$ and on $\left[x,x_{q+1}\right]$ by the second named straight line. The absolute values of the derivatives of the thus defined (new) broken line $s^{(i)}$ are again bounded by $M_i''$.

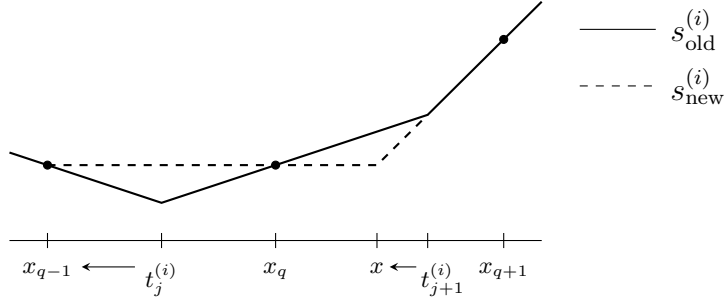
\begin{figure}[h]
\begin{center}
\begin{tikzpicture}
\draw (-3,0)--(4,0);
\draw (-2.5,0.1)--(-2.5,-0.1) node[below=2pt] {\small $x_{q-1}$};
\draw (-1,0.1)--(-1,-0.1) node[below=2pt] {\small $t_j^{(i)}$};
\draw (0.5,0.1)--(0.5,-0.1) node[below=2pt] {\small $x_q$};
\draw (1.8333,0.1)--(1.8333,-0.1) node[below=2pt] {\small $x$};
\draw (2.5,0.1)--(2.5,-0.1);
\draw (2.7,-0.1) node[below=2pt] {\small $t_{j+1}^{(i)}$};
\draw (3.5,0.1)--(3.5,-0.1) node[below=2pt] {\small $x_{q+1}$};

\draw[->,>=stealth] (-1.35,-0.35)--(-2.05,-0.35);
\draw[->,>=stealth] (2.35,-0.35)--(2,-0.35);

\draw[fill] (0.5,1) circle (1.5pt);
\draw[fill] (-2.5,1) circle (1.5pt);
\draw[fill] (3.5,2.6667) circle (1.5pt);

\draw[thick] (-3,1.1667)--(-1,0.5)--(2.5,1.6667)--(4,3.1667);
\draw[thick,dashed] (-2.5,1)--(1.8333,1)--(2.5,1.6667);

\draw (4.5,2.8)--(5.5,2.8) node[right=2pt] {\large $s^{(i)}_{\text{old}}$}; 
\draw[dashed] (4.5,2.05)--(5.5,2.05) node[right=2pt] {\large $s^{(i)}_{\text{new}}$}; 
\end{tikzpicture}
\end{center}
\caption{Illustration of the proof of Assertion \ref{behauptungAbleitungBeschraenkt}, case 3.2.2.1.2.1.3.}
\label{fig6}
\end{figure}

\vspace{0.03in}
\underline{Case 3.2.2.1.2.2:} $s^{(i)}$ has no (proper) knot in $\left(x_q,x_{q+1}\right]$. Then $t_j^{(i)}$ is the only knot of $s^{(i)}$ in $\left[x_{q-1},x_{q+1}\right]$. (\ref{eq:gleichungG}) holds on the interval $\left[x_0,x_{q-1}\right]$ because of the induction hypothesis. (\ref{eq:gleichungG}) also holds on $\left[x_{q-1},t_j^{(i)}\right]$ because of the induction hypothesis since for $z \in \left[x_{q-1},t_j^{(i)}\right]$ we have: $\frac{d}{dx}s^{(i)}(z) = \frac{d}{dx}s^{(i)}\left(x_{q-1}^{-}\right)$ where the latter denotes the left-side derivative of \, $s^{(i)}$ in $x_{q-1}$. \; Finally, (\ref{eq:gleichungG}) ist valid on \, $\left[t_j^{(i)},x_q\right]$ \, because $\frac{d}{dx}s^{(i)}(z) = \frac{s^{(i)}\left(x_{q+1}\right) - s^{(i)}\left(x_q\right)}{x_{q+1}-x_q}$ holds for all $z \in \left[t_j^{(i)},x_q\right]$.

\vspace{0.03in}
\underline{Case 3.2.2.1.2.3:} There are at least two (proper) knots $t_{j+1}^{(i)},t_{j+2}^{(i)}$ in the intervall $\big(x_q,x_{q+1}\big]$. Then we replace $s^{(i)}$ on $\big[x_{q-1},x_q\big]$ by the straight line connecting the points $\big(x_{q-1},s^{(i)}(x_{q-1})\big)$ and $\big(x_q,s^{(i)}(x_q)\big)$ and on $\big[x_q,x_{q+1}\big]$ by the straight line connecting $\big(x_q,s^{(i)}(x_q)\big)$ and $\big(x_{q+1},s^{(i)}(x_{q+1})\big)$. The knots $t_j^{(i)}, t_{j+1}^{(i)}$ and $t_{j+2}^{(i)}$ thus disappear and  $x_{q-1},x_q,x_{q+1}$ become knots by this modification. The thus modified $s^{(i)}$ fulfills (\ref{eq:gleichungG}) on $\big[x_0,x_{q-1}\big]$ because of the induction hypothesis and on $\big[x_{q-1},x_{q+1}\big]$ by construction.

\vspace{0.03in}
\underline{Case 3.2.2.2:} Now we consider the case $q = \mu+1$. Then we move the knot in $\big(x_{q-1},x_q\big)$ onto $x_{q-1}$ and replace $s^{(i)}$ on the interval $\left[x_{q-1},x_q\right]$ by the straight line connecting the points $\left(x_{q-1},s^{(i)}\left(x_{q-1}\right)\right)$ and $\left(x_q,s^{(i)}\left(x_q\right)\right)$. The estimate (\ref{eq:gleichungG}) now holds for the modified $s^{(i)}$ on $\big[x_0,x_{\mu+1}\big]$.

\vspace{0.03in}
\underline{Case 3.2.3:} We consider the case $s^{(i)}\left(t_j^{(i)}\right) > \sigma\left(t_j^{(i)}\right)$. This case is treated in analogy to case 3.2.2 an its subcases.

\vspace{0.03in}
This proves Assertion \ref{behauptungAbleitungBeschraenkt}.
\end{proof}

\vspace{0.1in}
We can now conclude the proof of Lemma \ref{minimalfolgenlemma}: For terms $s^{(i)}$ of a minimizing sequence the $\big|s^{(i)}\big|$ in (\ref{eq:gleichungB}) are bounded with respect to $i\in\mathbb{N}$ and $0 \leq j \leq \mu+1$ and therefore also the $M_i''$ in (\ref{eq:gleichungC}) by $M'''\in\mathbb{R}$. 

\vspace{0.05in}
For a minimizing sequence $s^{(i)}$ (modified as in the proof of Assertion \ref{behauptungAbleitungBeschraenkt}, if necessary) the derivatives are therefore bounded by $M'''$ on $[a,b]$ and the function values by $M''''$. Thus, $M := \max(M''',M'''')$ bounds function values and derivatives of the $s^{(i)}$ on all of $[a,b]$. \hfill $\square$
\end{prooftheorem}


\vspace{0.2in}
We now turn to the promised existence theorem:
\begin{theorem}\label{existenzsatz} 
With $1 \leq p \leq \infty$ there is a  $s^{\ast} \in S^1_k[a,b]$ such that
\begin{equation*}
\left\|f-s^{\ast}\right\|_{p,X} \;\;=\;\; \inf_{s \in S^1_k[a,b]} \left\|f-s\right\|_{p,X} \quad.
\end{equation*}
\end{theorem}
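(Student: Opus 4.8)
The plan is to realize the optimal spline as the limit of a well-chosen minimizing sequence, exploiting the uniform bounds furnished by Lemma \ref{minimalfolgenlemma}. First I would invoke Lemma \ref{minimalfolgenlemma} to fix a minimizing sequence $\left(s^{(i)}\right)_{i\in\mathbb{N}}$ in $S^1_k[a,b]$ together with a constant $M>0$ realizing the bounds (\ref{eq:gleichungA}), so that the $s^{(i)}$ are uniformly bounded by $M$ and, having slopes bounded by $M$, are uniformly Lipschitz and hence equicontinuous on $[a,b]$. This is exactly the pair of properties one needs to set a compactness argument in motion.

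Next I would pass to a convergent subsequence in two stages. Each $s^{(i)}$ carries at most $k$ knots $t_1^{(i)}\le\cdots\le t_k^{(i)}$ lying in the compact interval $[a,b]$; by the Bolzano--Weierstrass theorem I can extract a subsequence along which every knot converges, $t_j^{(i)}\to t_j^{\ast}\in[a,b]$, with the ordering $t_1^{\ast}\le\cdots\le t_k^{\ast}$ preserved in the limit (allowing equalities, i.e.\ coalescence). Applying the Arzel\`a--Ascoli theorem to the uniformly bounded, equicontinuous family then yields a further subsequence converging uniformly to some $s^{\ast}\in C[a,b]$.

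It then remains to verify that $s^{\ast}\in S^1_k[a,b]$ and that it attains the infimum. For membership I would argue that on every open interval $\left(t_j^{\ast},t_{j+1}^{\ast}\right)$ with $t_j^{\ast}<t_{j+1}^{\ast}$ the tail of the sequence is eventually affine, since the converging knot intervals $\left(t_j^{(i)},t_{j+1}^{(i)}\right)$ eventually contain any fixed compact subinterval; hence the uniform limit $s^{\ast}$ is affine there. Consequently $s^{\ast}$ is a continuous piecewise-affine function whose breakpoints lie among the finitely many $t_j^{\ast}$, that is, $s^{\ast}\in S^1_k[a,b]$, with inactive knots absorbing any coalescences. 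Finally, uniform convergence gives $s^{(i)}\left(x_j\right)\to s^{\ast}\left(x_j\right)$ at each of the finitely many abscissae, so by continuity of the $p$-norm $\left\|f-s^{\ast}\right\|_{p,X}=\lim_i\left\|f-s^{(i)}\right\|_{p,X}$ equals the infimum, which proves optimality.

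The main obstacle is the membership step in the presence of coalescing knots: this is precisely the phenomenon that, in the continuous theory, forces one into the closure of the spline class (cf.\ Rice \cite{rice}). What saves the discrete, first-degree setting is the uniform derivative bound from Lemma \ref{minimalfolgenlemma}. Because the slopes cannot blow up, a vanishing knot interval $\left(t_j^{(i)},t_{j+1}^{(i)}\right)$ merely contributes a piece of bounded slope and shrinking length, so it collapses into a single admissible (possibly inactive) knot of $s^{\ast}$ rather than producing a persistent spike or an additional breakpoint --- exactly the degeneracy exhibited by the example in Remark \ref{minimalfolgenbeispiel}, which Lemma \ref{minimalfolgenlemma} has already excluded. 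I therefore expect the compactness extractions to be routine and the careful identification of the limit as an element of $S^1_k[a,b]$ to carry the real weight of the argument.
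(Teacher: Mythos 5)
Your proof is correct, but it takes a genuinely different route from the paper's. The paper does not use Arzel\`a--Ascoli in its proof: after passing (as you do) to a subsequence along which the number of proper knots is fixed and every knot sequence converges, it \emph{constructs} the limit explicitly --- on each interval between consecutive distinct knot limits $\tau_l < \tau_{l+1}$ it defines $s^{\ast}$ as the affine function whose value and slope at the midpoint $\tau_{l,l+1}$ are the limits of $s^{(i)}\left(\tau_{l,l+1}\right)$ and $\frac{d}{dx}s^{(i)}\left(\tau_{l,l+1}\right)$ --- and then proves uniform convergence $s^{(i)}\to s^{\ast}$ by a hands-on $\varepsilon$-argument with a case distinction on whether a knot of $s^{(i)}$ lies between the midpoint and the evaluation point (Assertion \ref{behauptungGleichmaessigeKonvergenz}), followed by a separate continuity argument at the $\tau_j$ (Assertion \ref{behauptungLinearUndStetig}). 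Your soft compactness argument (Bolzano--Weierstrass for the knots, Arzel\`a--Ascoli for the functions, enabled by the uniform Lipschitz bound from Lemma \ref{minimalfolgenlemma}) obtains uniform convergence to a continuous limit for free --- a possibility the paper itself records in Remark \ref{bemerkungBzglArzelaAscoli} but does not pursue --- and shifts all the weight onto the identification step, which your observation that the knot intervals $\bigl(t_j^{(i)},t_{j+1}^{(i)}\bigr)$ eventually contain any compact subinterval of $\bigl(t_j^{\ast},t_{j+1}^{\ast}\bigr)$ settles cleanly, since affinity survives uniform limits. What your route buys is brevity and standard machinery in place of the paper's lengthy case analysis; what the paper's explicit construction buys is a self-contained, elementary argument that exhibits the knots and slopes of the limit spline concretely as limits of those of the $s^{(i)}$, which is in the constructive spirit of the numerical method the paper is preparing. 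The final optimality step --- pointwise convergence at the finitely many abscissae plus continuity of the $p$-norm, and padding with improper knots if fewer than $k$ proper knots survive --- is essentially identical in both arguments.
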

\begin{proof}
Let $s^{(i)} \in S^1_k[a,b]$ be a bounded minimizing sequence according to Lemma \ref{minimalfolgenlemma} with $M$ as bound. The following additional claims can be fulfilled by transition to a subsequence if necessary: For fixed $r$ with $0 \leq r \leq k$ each $s^{(i)}$ has $r$ (proper) knots $t_1^{(i)},\ldots,t_r^{(i)}$ with
\begin{equation}\label{eq:gleichungH} 
a =: t_0^{(i)} < t_1^{(i)} < \ldots < t_r^{(i)} < t_{r+1}^{(i)} := b \quad.
\end{equation}
Furthermore, let all sequences of knots $\left(t_j^{(i)}\right)_{i \in \mathbb{N}}$ be convergent and we denote by $\tau_1,\ldots,\tau_{r'}$ the distinct limits in the open interval $(a,b)$ where $r' \leq r$. In addition, $\tau_0 := a$ and $\tau_{r'+1} := b$ can also be limits of these sequences. Then holds
\begin{equation}\label{eq:gleichungI} 
a =: \tau_0 < \tau_1 <  \ldots < \tau_{r'} < \tau_{r'+1} := b \quad.
\end{equation}
With
\begin{equation}\label{eq:gleichungJ} 
\tau_{l,l+1} := \frac{1}{2}\left(\tau_l + \tau_{l+1}\right), \;\;\;\;0 \leq l \leq r'
\end{equation}
let the sequences 
\begin{equation}\label{eq:gleichungK} 
\left( s^{(i)}\left(\tau_{l,l+1}\right)\right)_{i \in \mathbb{N}} \hspace{0.3in}\text{and}\hspace{0.3in} \left( \frac{d}{dx}s^{(i)}\left(\tau_{l,l+1}\right)\right)_{i \in \mathbb{N}}
\end{equation}
(bounded because of Lemma \ref{minimalfolgenlemma}) also be convergent for each $l$ with $0 \leq l \leq r'$.

\vspace{0.1in}
We can now progress to the definition of $s^{\ast}$: For $x \in \left[\tau_l,\tau_{l+1}\right)$, $0 \leq l \leq r'-1$ or $x \in \left[\tau_{r'},\tau_{r'+1}\right]$ we set
\begin{equation}\label{eq:gleichungL} 
s^{\ast}(x) \;:=\; \lim_{i \rightarrow \infty} s^{(i)}\left(\tau_{l,l+1}\right) + \left(x-\tau_{l,l+1}\right) \cdot \lim_{i \rightarrow \infty} \frac{d}{dx} s^{(i)}\left(\tau_{l,l+1}\right) \quad.
\end{equation}
By definition $s^{\ast}: [a,b] \rightarrow \mathbb{R}$ is linear on $\left[\tau_l,\tau_{l+1}\right)$ for $0 \leq l \leq r'-1$ and on $\left[\tau_{r'},\tau_{r'+1}\right]$.

\vspace{0.1in}
We prove successively:
\begin{assertion}\label{behauptungGleichmaessigeKonvergenz} 
The sequence $\left(s^{(i)}\right)_{i \in \mathbb{N}}$ converges uniformly to $s^{\ast}$ on $[a,b]$:
\begin{equation*}
\forall\,\varepsilon> 0 \; \exists\,i_0\in\mathbb{N}\;:\;\; \forall\, i\geq i_0,\; x\in[a,b]\;:\;\; \big|s^{(i)}(x)-s^\ast(x)\big| < \varepsilon \quad.
\end{equation*}
\end{assertion}
\begin{assertion}\label{behauptungLinearUndStetig} 
$s^{\ast} \in S^1_k[a,b]$ is piecewiese linear and continuous on $[a,b]$.
\end{assertion}
\begin{assertion}\label{behauptungKonvergenzZusammenfassung} 
The sequence $\left(s^{(i)}\right)_{i \in \mathbb{N}}$ converges uniformly to the continuous, piecewiese linear function $s^{\ast}$.
\end{assertion}
\begin{assertion}\label{behauptungBesteApproximation} 
$s^{\ast}$ is a best approximation to $f$ on $X$.
\end{assertion}

\vspace{0.2in}
Proof of Assertion \ref{behauptungGleichmaessigeKonvergenz}: Let all knots chosen in this proof be proper knots and $\varepsilon > 0$ be given arbitrarily small. Let $i_0 \in \mathbb{N}$ be chosen sufficiently great such that for all $i \geq i_0$:
\begin{equation} 
\left|s^{(i)}\left(\tau_{l,l+1}\right) - s^{\ast}\left(\tau_{l,l+1}\right) \right| \;<\; \frac{\varepsilon}{4} \;\;\;\text{for $0 \leq l \leq r'$}, \label{eq:gleichungM} 
\end{equation}
\begin{equation}
\left|\frac{d}{dx}s^{(i)}\left(\tau_{l,l+1}\right) - \frac{d}{dx}s^{\ast}\left(\tau_{l,l+1}\right) \right| \;<\; \frac{\varepsilon}{4(b-a)} \;\;\;\text{for $0 \leq l \leq r'$}, \label{eq:gleichungN} 
\end{equation}
\begin{equation}
\left|t_l^{(i)} - \lim_{i \rightarrow \infty} t_l^{(i)}\right| \;<\; \frac{\varepsilon}{8 M} \;\;\;\text{for $l=1,2,\ldots,r$}, \label{eq:gleichungO}
\end{equation}
\begin{equation}
\left|t_l^{(i)} - \lim_{i \rightarrow \infty} t_l^{(i)}\right| \;<\; \frac{1}{2} \cdot \min_{q=1,2,\ldots,r'+1} \left(\tau_q - \tau_{q-1}\right) \;\;\text{for $l=1,2,\ldots,r$}. \label{eq:gleichungP}
\end{equation}
For $i\geq i_0$ and $x \in \left[a,b\right]$ we have $x\in\big[\tau_j,\tau_{j+1}\big)$ for a $j$ with $0\leq j\leq r'$ (cases 1 and 2) or $x=b$ (case 3). We have to show: $\left|s^{(i)}(x) - s^{\ast}(x)\right| < \varepsilon$.

\vspace{0.03in}
\underline{Case 1:} $x \in \left[\tau_{j,j+1},\tau_{j+1}\right)$ holds, that is $x$ lies in the right-hand half of $\left[\tau_j,\tau_{j+1}\right)$.

\vspace{0.03in}
\underline{Case 1.1:} $s^{(i)}$ has no knot in $\left[\tau_{j,j+1},x\right)$ and is therefore linear on this subintervall and from (\ref{eq:gleichungM}) and (\ref{eq:gleichungN}) thus follows:
\begin{eqnarray*}
&& \left|s^{(i)}(x)-s^{\ast}(x)\right| \\
&=& \left|s^{(i)}\left(\tau_{j,j+1}\right) + \left(x-\tau_{j,j+1}\right) \cdot \frac{d}{dx}s^{(i)}\left(\tau_{j,j+1}\right)\right. \\
&& \;\; - \left.\left[s^{\ast}\left(\tau_{j,j+1}\right) + \left(x-\tau_{j,j+1}\right) \cdot \frac{d}{dx}s^{\ast}\left(\tau_{j,j+1}\right)\right]\right| \\
&\leq& \left|s^{(i)}\left(\tau_{j,j+1}\right) - s^{\ast}\left(\tau_{j,j+1}\right)\right| \;+\; (b-a) \left|\frac{d}{dx}s^{(i)}\left(\tau_{j,j+1}\right) - \frac{d}{dx} s^{\ast}\left(\tau_{j,j+1}\right)\right| \\
&\leq& \frac{\varepsilon}{4} \;+\; \frac{\varepsilon}{4} \;\;<\;\; \varepsilon \quad.
\end{eqnarray*}

\vspace{0.03in}
\underline{Case 1.2:} $s^{(i)}$ has a knot in $\left[\tau_{j,j+1},x\right)$. Let $t_l^{(i)}$ be the smallest knot of $s^{(i)}$ in the interval $\left[\tau_{j,j+1},x\right)$. Then follows $\tau_{j,j+1} < t_l^{(i)} < x$ from (\ref{eq:gleichungP}) and we get:
\begin{eqnarray*}
&& \left|s^{(i)}(x)-s^{\ast}(x)\right| \\
&\leq& \left|s^{(i)}(x) - s^{(i)}\left(t_l^{(i)}\right)\right| + \left|s^{(i)}\left(t_l^{(i)}\right) - s^{\ast}\left(t_l^{(i)}\right)\right| + \left|s^{\ast}\left(t_l^{(i)}\right) - s^{\ast}(x)\right| \\
&\leq& M \left|t_l^{(i)} - x\right| + \frac{\varepsilon}{4} + \frac{\varepsilon}{4} + 2 M \left|t_l^{(i)} - x\right| \\
&\leq& M \left|t_l^{(i)} - \tau_{j+1}\right| \;+\; \frac{2\varepsilon}{4} \;+\; 2 M \left|t_l^{(i)} - \tau_{j+1}\right| \\
&\leq& \frac{\varepsilon}{8} + \frac{2\varepsilon}{4} + \frac{\varepsilon}{4} \;\;<\;\; \varepsilon \quad.
\end{eqnarray*}

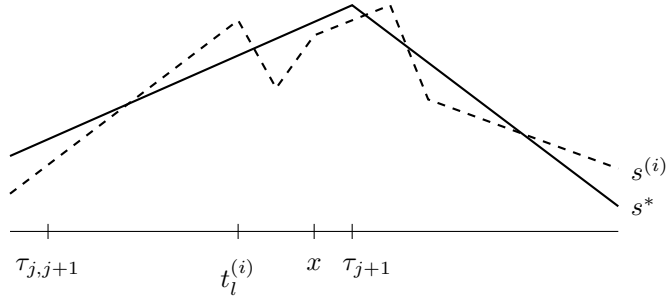
\begin{figure}[h]
\begin{center}
\begin{tikzpicture}
\draw (-4,0)--(4,0);
\draw (-3.5,0.1)--(-3.5,-0.1) node[below=4pt] {$\tau_{j,j+1}$};
\draw (-1,0.1)--(-1,-0.1) node[below=4pt] {$t_l^{(i)}$};
\draw (0,0.1)--(0,-0.1) node[below=4pt] {$x$};
\draw (0.5,0.1)--(0.5,-0.1);
\draw (0.7,-0.1) node[below=4pt] {$\tau_{j+1}$};

\draw[thick] (-4,1)--(0.5,3)--(4,0.3333) node[right=1pt] {$s^\ast$};
\draw[thick,dashed] (-4,0.5)--(-1,2.8)--(-0.5,1.9)--(0,2.6)--(1,3)--(1.5,1.75)--(4,0.8333) node[right=1pt] {$s^{(i)}$};
\end{tikzpicture}
\end{center}
\caption{Illustration of the proof of Assertion \ref{behauptungGleichmaessigeKonvergenz}, case 1.2.}
\label{fig7}
\end{figure}

\vspace{0.15in}
Here, $M$ (see (\ref{eq:gleichungF})) is an upper bound for the derivatives of $s^{(i)}$, the upper estimate
\begin{equation*}
\left|s^{(i)}\left(t_l^{(i)}\right) - s^{\ast}\left(t_l^{(i)}\right)\right| \leq \frac{\varepsilon}{4} + \frac{\varepsilon}{4}
\end{equation*}
follows in analogy to case 1.1 from (\ref{eq:gleichungM}) and (\ref{eq:gleichungN}) and $2M$ is an an upper bound for the absolute values of the derivatives of $s^{\ast}$. In addition, the last line of the above sequence of inequalities follows from the fact that because of (\ref{eq:gleichungP}) the sequence $\left(t_l^{(q)}\right)_{q \in \mathbb{N}}$ converges to $\tau_{j+1}$ which allows for an application of (\ref{eq:gleichungO}).

\vspace{0.03in}
\underline{Case 2:} $x$ lies in the left half of $\left[\tau_j,\tau_{j+1}\right)$: $x \in \left[\tau_j,\tau_{j,j+1}\right)$. This case ist treated analogously to case 1.

\vspace{0.03in}
\underline{Case 3:} In this case $x = \tau_{r'+1} = b$. The proof can again be done in analogy to case 1, which completes the proof of Assertion \ref{behauptungGleichmaessigeKonvergenz}.

\vspace{0.1in}
Next, we prove Assertion \ref{behauptungLinearUndStetig}: Since $s^{\ast}$ is piecewise linear by construction it suffices to show that $s^{\ast}$ is continuous in the knots $\tau_1,\ldots,\tau_{r'}$. Let $j$ be arbitrarily but fixed $(1 \leq j \leq r')$ and $\varepsilon > 0$ given. Because of Assertion \ref{behauptungGleichmaessigeKonvergenz} there is a $i_0 \in \mathbb{N}$ with
\begin{equation}\label{eq:gleichungQ}
\left|s^{(i)}(z) - s^{\ast}(z)\right| \;<\; \frac{\varepsilon}{3} \;\;\;\;\;\; \forall \, z \in \left[a,b\right]\;,\;\; i \geq i_0 \quad.
\end{equation}
Furthermore, for $x \in \left(\tau_{j-1,j},\tau_j\right)$ and $y \in \left[\tau_j,\tau_{j,j+1}\right)$ let the following upper estimate hold:
\begin{equation}\label{eq:gleichungR} 
|x - y| \;<\; \frac{\varepsilon}{3M} \;=:\; \delta \quad.
\end{equation}
From that follows
\begin{eqnarray*}
&& \left|s^{\ast}(x)-s^{\ast}(y)\right| \\
&\leq& \left|s^{\ast}(x) - s^{(i)}\left(x\right)\right| + \left|s^{(i)}\left(x\right) - s^{(i)}\left(y\right)\right| + \left|s^{(i)}\left(y\right) - s^{\ast}(y)\right| \\
&<& \frac{\varepsilon}{3} \;+\; M \left|x-y\right| \;+\; \frac{\varepsilon}{3} \\
&<& \frac{\varepsilon}{3} \;+\; \frac{\varepsilon}{3} \;+\; \frac{\varepsilon}{3} \;\;\;=\;\;\; \varepsilon,
\end{eqnarray*}
where the second estimate follows from (\ref{eq:gleichungQ}) and the boundedness of the derivatives of  $s^{(i)}$, while the last estimate holds because of (\ref{eq:gleichungR}). This proves Assertion \ref{behauptungLinearUndStetig}.

\vspace{0.1in}
Assertion \ref{behauptungKonvergenzZusammenfassung} summarizes Assertion \ref{behauptungGleichmaessigeKonvergenz} and \ref{behauptungLinearUndStetig}. The proof of Theorem \ref{existenzsatz} is concluded with the proof of Assertion \ref{behauptungBesteApproximation}: Because the $s^{(i)}$ converge uniformly to $s^{\ast}$ (see Assertion \ref{behauptungKonvergenzZusammenfassung}) we conclude
\begin{equation*}
\lim_{i\rightarrow\infty}s^{(i)}\left(x_j\right) \;=\; s^{\ast}\left(x_j\right), \;\; j =0,1,\ldots,\mu+1.
\end{equation*}
Since without loss of generality $\left\|f-s^{(i)}\right\|_{p,X}$ converges to 
\begin{equation*}
\inf_{s \in S^1_k[a,b]} \left\|f-s\right\|_{p,X}
\end{equation*}
we get for $p < \infty$
\begin{eqnarray*}
\inf_{s \in S^1_k[a,b]} \left\|f-s\right\|_{p,X} &=& \lim_{i \rightarrow \infty} \left\|f - s^{(i)}\right\|_{p,X} \;\;=\;\; \lim_{i \rightarrow \infty} \left( \sum_{j=0}^{\mu+1} \left|f_j - s^{(i)}\left(x_j\right)\right|^p\right)^{\frac{1}{p}} \\
&=& \left( \sum_{j=0}^{\mu+1} \left|f_j - s^{\ast}\left(x_j\right)\right|^p\right)^{\frac{1}{p}} \;\;=\;\; \left\|f-s^{\ast}\right\|_{p,X} \quad.
\end{eqnarray*}
The case $p=\infty$ is treated in analogy.

\vspace{0.05in}
In case $r < k$ ($s^\ast$ has less than $k$ knots) these can be completed by additional $(k-r)$ improper knots, such that $s^\ast$ is seen to be an element of $S^1_k[a,b]$ in this case, too.

\vspace{0.05in}
This winds up the proof of Assertion \ref{behauptungBesteApproximation} and the proof of Theorem \ref{existenzsatz} is thus completed. \hfill $\square$
\end{proof}

\vspace{0.2in}
\begin{remark}\label{bemerkungBzglArzelaAscoli}
Lemma \ref{minimalfolgenlemma} also guarantees the uniform boundedness and equicontinuity of a (subsequence of a) minimizing sequence. The Theorem of Arzel\`{a}-Ascoli therefore also implies that a minimizing sequence possesses a subsequence converging uniformly to a continuous limit function.
\end{remark}


\vspace{0.2in}
For numerical prodedures it is advantageous to know more characteristic features of a best approximation. In this connection it is helpful to distinguish between knots coinciding with a data abscissa and knots situated between data abscissae. We call a knot $t_j$ of a first order spline $s \in S^1_k[a,b]$ an \textit{interior knot}, when it lies between two neighboring data abscissae, that is, there is a $q$ with $0 \leq q \leq \mu$ and $x_q < t_j < x_{q+1}$. If $t_j$ coincides with $x_q$, that is, there is a $q$ with $1 \leq q \leq \mu$ and $t_j := x_q$, then $t_j$ ist called a \textit{data knot}. 
\begin{theorem}\label{satzEigenschaftenBesteApprox} 
Let $\mu \geq k+1 \geq 2$ and $1 \leq p \leq \infty$. For given data $(x_0,f_0),\ldots$, $(x_{\mu+1},f_{\mu+1})$ with $x_0 < x_1 < \ldots < x_{\mu} < x_{\mu+1}$ and $f_0,\ldots,f_{\mu+1} \in \mathbb{R}$ exists a best approximation $s^{\ast} \in S^1_k\left[x_0,x_{\mu+1}\right]$ 
\begin{equation*}
\big\|f-s^\ast\big\|_{p,X} \;\;=\;\; \inf_{s\in S^1_k[x_0,x_{\mu+1}]}\big\|f-s\big\|_{p,X}\;,
\end{equation*}
with the following additional features where $t_1,\ldots,t_k$ with $t_0 := x_0 < t_1 < \ldots < t_k < t_{k+1} := x_{\mu+1}$ denote the knots of $s^\ast$:
\begin{itemize}
  \item[(a)] There are no knots in the boundary regions. More precisely: 
\begin{equation*}
x_0 < x_1 \leq t_1 < \ldots < t_k \leq x_{\mu} < x_{\mu+1} \quad.
\end{equation*}
	
	\item[(b)] Data abscissae neighboring to interior knots are not knots.
	
	\item[(c)] Between two (not necessarily neighboring) interior knots of $s^\ast$ lie at least two data abscissae which are not knots of $s^\ast$.
	
	\item[(d)] On or between neighboring knots lie at least two data abscissae:
\begin{equation*}
\forall\, j,\; 0 \leq j \leq r: \;\; \exists\, i,\; 0\leq i \leq \mu: \;\; t_j \leq x_i < x_{i+1} \leq t_{j+1} \quad.
\end{equation*}

	\item[(e)] If an interior knot is situated between two neighboring data abscissae, then no additional knot lies on or between these data abscissae. That is, the proposition "`$t_j$ is a knot of $s^\ast$ with $x_q < t_j < x_{q+1}$"' implies $t_{j-1} < x_q$ and $t_{j+1} > x_{q+1}$.
	
	\item[(f)] Let $t_j$ be an interior knot. Then in each of the intervals $(-\infty,t_j)$ and $(t_j,\infty)$ there is a data abscissa from $\big\{x_1,\ldots,x_\mu\big\}$ which is not a knot.
	
	\item[(g)] For $p < \infty$ we have: Between an interior and a neighboring data knot of $s^\ast$ lies either exactly one data abscissa $x_q$ which is then reproduced ($t_j < x_q < t_{j+1}$, $s^\ast(x_q) = f_q$) or there exist at least two data abscissae $x_q, x_{q+1}$ between the knots ($t_j < x_q < x_{q+1} < t_{j+1}$).
	
	\item[(h)] All interior knots are proper knots, that is, the first derivative is discontinuous in all interior knots.
\end{itemize}
\end{theorem}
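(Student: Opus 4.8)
The plan is to build on Theorem~\ref{existenzsatz}: the set of best approximations is nonempty, so I would select from it one spline $s^\ast$ whose number $r$ of \emph{proper} knots is as small as possible, and then park every remaining improper knot on a data abscissa. This placement is possible because $\mu\geq k+1$ leaves at least $\mu-r>k-r$ data abscissae free, and it settles (h) at once: a knot lying strictly between two data abscissae cannot be one of the parked improper knots, hence is proper. Property (a) is equally direct. If a knot sat in $(x_0,x_1)$ or $(x_\mu,x_{\mu+1})$, I would replace $s^\ast$ on that boundary interval by the chord through its two endpoints; since the open interval carries no data abscissa, no value $s^\ast(x_i)$ and therefore no $\|f-s^\ast\|_{p,X}$ changes, while a proper knot disappears, contradicting the minimality of $r$. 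Thus $x_1\leq t_1$ and $t_k\leq x_\mu$.

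For the interior statements the central device is \emph{value-preserving surgery}: any modification of $s^\ast$ that leaves every $s^\ast(x_i)$ unchanged leaves $\|f-s^\ast\|_{p,X}$ unchanged for \emph{every} $p$, so if such a modification lands in $S^1_k[a,b]$ with strictly fewer proper knots it contradicts the minimality of $r$. The basic instance is chord replacement on a data-abscissa-free interval, which deletes all proper knots in its interior but may raise new kinks at the endpoints; the refinement needed throughout is to choose the replacing segments so that their outer slopes agree with those of $s^\ast$ just outside the affected region, routing the single surviving kink to the intersection point of the two outer lines (or onto an already-present knot or data abscissa). Carried out this way the surgery removes a knot without manufacturing a compensating one. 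Using it I would prove the ``at most one knot per data gap'' part of (e) and statement (b) together: two knots inside one gap $(x_q,x_{q+1})$, or a knot at $x_q$ or $x_{q+1}$ adjacent to an interior knot $t_j\in(x_q,x_{q+1})$, always admits such a reduction; the remaining assertions $t_{j-1}<x_q$ and $t_{j+1}>x_{q+1}$ of (e) then follow.

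Statement (d) is the crux and I would treat it by a dimension count rather than an explicit chord. Once (a), (b), (e) hold, the only way $[t_j,t_{j+1}]$ can contain fewer than two data abscissae is that two consecutive \emph{interior} knots straddle a single data point. Pinning the values and the one-sided slopes of $s^\ast$ at the two flanking data abscissae, and keeping the lone enclosed value fixed, leaves a genuine one-parameter family of value-preserving shapes with two interior kinks; following this family to the boundary of feasibility forces the two kinks to merge (or one to degenerate), producing a configuration with one fewer proper knot and the \emph{same} error, again contradicting minimality. Because the whole argument keeps all data values fixed, it is insensitive to $p$. With (d) in hand, (f) is immediate (by (a) the two data neighbours of an interior knot lie in $\{x_1,\ldots,x_\mu\}$, and by (b) they are non-knots), and (c) follows from (d) and (b) by combining, over a chain of knots, the two guaranteed data abscissae in each inter-knot interval with the fact that abscissae adjacent to interior knots are never knots. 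I expect the exhaustive verification that the surgeries of the preceding paragraph and the degeneration here really yield a net loss of a proper knot \emph{in every geometric sub-case} to be the main obstacle, mirroring the long case split already seen in the proof of Assertion~\ref{behauptungAbleitungBeschraenkt}.

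Finally, (g) is the one place where $p<\infty$ is needed, and here I would perturb instead of count. Between an interior knot $t_j$ and a neighbouring data knot the segment has a genuinely free knot on one side while, by (b), the stretch on the other side of $t_j$ down to the previous data abscissa carries no data. If exactly one data abscissa $x_q$ lay strictly between the two knots with $s^\ast(x_q)\neq f_q$, I would tilt and shift the line on $[t_j,t_{j+1}]$ so as to move $s^\ast(x_q)$ towards $f_q$, absorbing the continuity change on the adjacent data-free segment and altering no other data value; for $p<\infty$ the term $|f_q-s^\ast(x_q)|^p$ then strictly decreases while nothing else grows, contradicting optimality, so $x_q$ must be reproduced. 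The alternative, at least two enclosed data abscissae, is what remains. The exclusion of $p=\infty$ is exactly that this local tilt need not lower the maximum residual.
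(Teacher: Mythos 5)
Your overall template --- pick an extremal best approximation, then perform value-preserving surgery to contradict extremality --- is the same as the paper's, but your choice of the extremal quantity breaks the argument. You minimize the number of \emph{proper} knots; the paper minimizes the number of \emph{interior} knots (knots lying strictly between data abscissae). The difference is not cosmetic. Chord replacement preserves every value $s^\ast(x_i)$, but it generically creates a new kink at the data abscissa bounding the replaced interval: in your argument for (a), the chord through $\big(x_0,s^\ast(x_0)\big)$ and $\big(x_1,s^\ast(x_1)\big)$ will in general have a slope different from that of $s^\ast$ just to the right of $x_1$, so the proper knot in $(x_0,x_1)$ disappears but a new proper knot appears at $x_1$. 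The number of proper knots is unchanged, and there is no contradiction with your minimality; what strictly decreases is the number of interior knots, which is exactly why the paper counts those instead. Indeed, a best approximation with minimal number of proper knots can perfectly well have a knot in $(x_0,x_1)$: chord replacement just produces a second, equally minimal best approximation with the kink moved onto $x_1$, so both are admissible under your selection rule and nothing is contradicted. The same objection hits your treatment of (b), (c), (d) and (e): every surgery you describe (and every one in the paper) trades interior knots for data knots; none is guaranteed to lower the count of proper knots.

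Your proposed repair --- match the outer slopes and route the single surviving kink to the intersection of the two outer lines --- does not rescue this, because that modification is no longer value-preserving: the two extended outer lines need not pass through the prescribed values at the data abscissae inside the affected region (this is precisely the geometric obstruction that the paper's cases 2.1.1 through 2.1.2.3 in the proof of (c) are navigating, and there the surviving kink is deliberately parked on a data abscissa rather than eliminated). At a boundary interval the repair fails outright: there is no outer line to the left of $x_0$, and extending the right-hand line down to $x_0$ changes $s^\ast(x_0)$ and hence the error. The fix is to adopt the paper's extremal criterion: take $s^\ast$ with minimal number of interior knots, handle $r=k$ and $r<k$ separately (in the latter case one first shows such an $s^\ast$ has no interior knots at all, then parks the $k-r$ improper knots on free data abscissae). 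With that change, your surgeries for (a), (b), (e), your reduction of (f) and (c) to the other items, and your perturbation argument for (g) (which is essentially the paper's $s^\ast_{+}$/$s^\ast_{-}$ tilt, and correctly isolates why $p<\infty$ is needed) all go through; your one-parameter degeneration argument for (d) then becomes unnecessary, since the straddling configuration it targets is already excluded by (c), exactly as in the paper.
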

\begin{proof}
From Theorem \ref{existenzsatz} we know that at least one best approximation exists in $S^1_k[a,b]$. Let $s^\ast$ be a best approximation from $S^1_k[a,b]$ with minimal number of interior knots. The $r$, $0 \leq r \leq k$, proper knots of $s^\ast$ are denoted by $\tau_0 := x_0 < \tau_1 < \ldots < \tau_r < \tau_{r+1} := b$. In a first step we prove (a) to (g) under the additional assumption $r=k$:

\vspace{0.1in}
We prove (a) by contradiction. Assume $x_0 = \tau_0 < \tau_1 < x_1$. Then we can reduce the number of interior knots by one by replacing $s^\ast$ on $[x_0,x_1]$ by the straight line connecting $\big(x_0,s^\ast(x_0)\big)$ and $\big(x_1,s^\ast(x_1)\big)$. $\tau_1$ ceases to be interior point and $x_1$ becomes a data knot whithout change of the approximation quality. This is a contradiction to the assumed minimal number of interior knots. The case $x_\mu < \tau_r < x_{\mu+1}$ is lead to a contradiction analogously.

\begin{figure}[h]
\begin{center}
\begin{tikzpicture}
\draw (-2.5,0)--(3,0);
\draw (-2,0.1)--(-2,-0.1) node[below=2pt] {$x_0$};
\draw (0,0.1)--(0,-0.1) node[below=2pt] {$\tau_1$};
\draw (2,0.1)--(2,-0.1) node[below=2pt] {$x_1$};

\draw[fill] (-2,0.25) circle (2.0pt);
\draw[fill] (2,1.5) circle (2.0pt);

\draw[thick] (-2,0.25)--(0,1.5)--(3,0.5);
\draw[thick,dashed] (-2,0.25)--(2,0.8333);
\end{tikzpicture}
\end{center}
\caption{Illustration of the proof of Theorem \ref{satzEigenschaftenBesteApprox} (a).}
\label{fig8}
\end{figure}
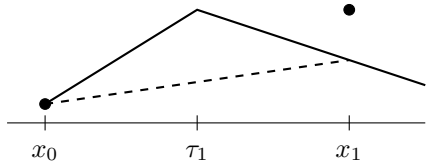

\vspace{0.2in}
To prove (b) let $\tau_j$ be an interior knot of $s^\ast$ and $x_q$ and $x_{q+1}$ knots neighboring to $\tau_j$, that is, $x_q < \tau_j < x_{q+1}$. If $x_q$ was a data knot we could replace $s^\ast$ on $[x_q,x_{q+1}]$ by the straight line connecting $\big(x_q,s^\ast(x_q)\big)$ and $\big(x_{q+1},s^\ast(x_{q+1})\big)$ whithout change of the approximation quality. At the same time this modification diminishes the number of interior knots by (at least) one which is a contradiction to the assumed minimal number of interior knots. The arguments for $x_{q+1}$ follow the same line.

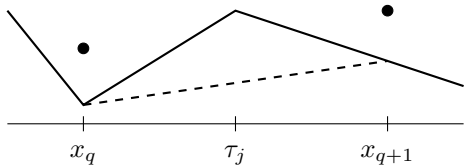
\begin{figure}[h]
\begin{center}
\begin{tikzpicture}
\draw (-3,0)--(3,0);
\draw (-2,0.1)--(-2,-0.1) node[below=2pt] {$x_q$};
\draw (0,0.1)--(0,-0.1) node[below=2pt] {$\tau_j$};
\draw (2,0.1)--(2,-0.1) node[below=2pt] {$x_{q+1}$};

\draw[fill] (-2,1.0) circle (2.0pt);
\draw[fill] (2,1.5) circle (2.0pt);

\draw[thick] (-3,1.5)--(-2,0.25)--(0,1.5)--(3,0.5);
\draw[thick,dashed] (-2,0.25)--(2,0.8333);
\end{tikzpicture}
\end{center}
\caption{Illustration of the proof of Theorem \ref{satzEigenschaftenBesteApprox} (b).}
\label{fig9}
\end{figure}

\vspace{0.2in}
To prove assertion (c) by contradiction we assume that assertion (c) does not hold. We distinguish the following cases:

\vspace{0.03in}
\underline{Case 1:} There lies no data abscissa between two interior knots $\tau_j$ and $\tau_{j'}$, that is, we have the situation $x_{q-1} < \tau_j < \tau_{j'} < x_q$. By replacing $s^{\ast}$ on $\left[x_{q-1},x_q\right]$ by the straight line connecting $\left(x_{q-1},s^{\ast}\left(x_{q-1}\right)\right)$ and $\left(x_q,s^{\ast}\left(x_q\right)\right)$ we get a new best approximation with (at least) two interior knots less. This contradicts the assumed minimal number of interior knots.

\vspace{0.03in}
\underline{Case 2:} Exactly one data abscissa $x_q$ is lying between two interior knots $\tau_j < \tau_{j'}$. Then $x_q$ cannot be a knot. Because otherwise we could replace $s^{\ast}$ between $x_{q-1}$ and $x_q$ by the straight line connection the function values $s^{\ast}\left(x_{q-1}\right)$ in $x_{q-1}$ and $s^{\ast}\left(x_q\right)$ in $x_q$ and could thus construct a best approximation with less interior knots than $s^\ast$ has. This contradicts the assumption that $s^\ast$ has a minimal number of interior knots. In analogy it can be shown that there ist neither a knot distinct from $x_q$ between $\tau_j$ and $\tau_{j'}$. Thus, we have $j' = j+1$ and $x_{q-1} < \tau_j < x_q < \tau_{j+1} < x_{q+1}$. 

\vspace{0.03in}
We definine
\begin{equation*}
\sigma(x) \;:=\; s^\ast\left(x_{q-1}\right) + \left(x - x_{q-1}\right) \frac{s^{\ast}\left(x_q\right)-s^{\ast}\left(x_{q-1}\right)}{x_q - x_{q-1}} \quad.
\end{equation*}

\vspace{0.03in}
\underline{Case 2.1:} We consider the case $s^{\ast}\left(\tau_j\right) < \sigma\left(\tau_j\right)$.

\vspace{0.03in}
\underline{Case 2.1.1:} Let the inequality $s^{\ast}\left(x_{q+1}\right) \leq \sigma\left(x_{q+1}\right)$ hold. Then there exists a point of intersection $x \in \left(\tau_{j+1},x_{q+1}\right]$ of $s^{\ast}$ with the straight line $\sigma$. We can replace $s^{\ast}$ on $\left[x_{q-1},x\right]$ by $\sigma$ and thus receive again a best approximation $\tilde{s}$ of $f$ with at least one interior knot less than $s^{\ast}$ because $\tau_j$ and $\tau_{j+1}$ are not interior knots for $\tilde{s}$, $x_{q-1}$ is data knot of $\tilde{s}$ and only $x$ is a new (potentially interior) knot of $\tilde{s}$. This contradicts our assumption that $s^\ast$ has the minimal number of interior knots.

\begin{figure}[h]
\begin{center}
\begin{tikzpicture}
\draw (-4,0)--(3,0);
\draw (-3,0.1)--(-3,-0.1) node[below=2pt] {$x_{q-1}$};
\draw (-1.5,0.1)--(-1.5,-0.1) node[below=2pt] {$\tau_j$};
\draw (-0.5,0.1)--(-0.5,-0.1) node[below=2pt] {$x_q$};
\draw (0.5,0.1)--(0.5,-0.1) node[below=2pt] {$\tau_{j+1}$};
\draw (1.75,0.1)--(1.75,-0.1) node[below=2pt] {$x$};
\draw (2.5,0.1)--(2.5,-0.1);
\draw (2.7,-0.1) node[below=2pt] {$x_{q+1}$};

\draw[->,>=stealth] (-1.75,-0.35)--(-2.5,-0.35);
\draw[->,>=stealth] (0.9,-0.35)--(1.55,-0.35);

\draw[fill] (-0.5,1) circle (1.5pt);
\draw[fill] (-3,1) circle (1.5pt);
\draw[fill] (2.5,0.55) circle (1.5pt);

\draw[thick] (-4,1.5)--(-1.5,0.25)--(0.5,1.75)--(3,0.25);
\draw[thick,dashed] (1.75,1)--(-3,1);
\end{tikzpicture}
\end{center}
\caption{Illustration of the proof of Theorem \ref{satzEigenschaftenBesteApprox} (c), case 2.1.1.}
\label{fig10}
\end{figure}
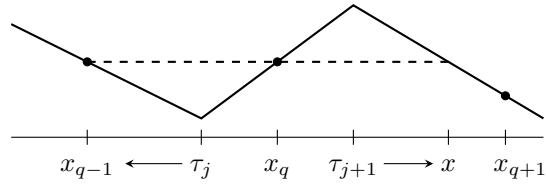

\vspace{0.1in}
\underline{Case 2.1.2:} We consider the case $s^{\ast}\left(x_{q+1}\right) > \sigma\left(x_{q+1}\right)$. By $\varphi$ we denote the connecting line between $\big(x_q,s^\ast(x_q)\big)$ and $(\tau_{j+1},s^\ast(\tau_{j+1})\big)$.

\vspace{0.03in}
\underline{Case 2.1.2.1:} $\varphi(x_{q+1}) > s^\ast(x_{q+1})$. Then there is a $x \in \big[x_{q-1},\tau_j\big)$ with $s^\ast(x) = \psi(x)$ where $\psi$ is the connecting line of the points $\big(x_q,s^\ast(x_q)\big)$ and $\big(x_{q+1},s^\ast(x_{q+1})\big)$. Replace $s^\ast$ on $\big[x,x_{q+1}\big]$ by $\psi$. Then $\tau_j$ and $\tau_{j+1}$ cease to be interior knots while $x$ and $x_{q+1}$ become new knots where $x_{q+1}$ is a data knot. The number of interior knots falls by at least one during this modification while the approximation power remains unchanged - contradicting the assumption on the minimal number of interior knots for the optimal $s^\ast$.

\begin{figure}[h]
\begin{center}
\begin{tikzpicture}
\draw (-4,0)--(3,0);
\draw (-3.5,0.1)--(-3.5,-0.1) node[below=2pt] {$x_{q-1}$};
\draw (-2.5,0.1)--(-2.5,-0.1) node[below=2pt] {$x$};
\draw (-1.5,0.1)--(-1.5,-0.1) node[below=2pt] {$\tau_j$};
\draw (0,0.1)--(0,-0.1) node[below=2pt] {$x_q$};
\draw (1.5,0.1)--(1.5,-0.1) node[below=2pt] {$\tau_{j+1}$};
\draw (2.5,0.1)--(2.5,-0.1);
\draw (2.7,-0.1) node[below=2pt] {$x_{q+1}$};

\draw[->,>=stealth] (-1.75,-0.35)--(-2.25,-0.35);
\draw[->,>=stealth] (1.85,-0.35)--(2.25,-0.35);

\draw[fill] (0,1) circle (1.5pt);
\draw[fill] (-3.5,1.0) circle (1.5pt);
\draw[fill] (2.5,1.5) circle (1.5pt);

\draw[thick] (-4,1.5)--(-1.5,0.25)--(1.5,1.75)--(3,1.0);
\draw[thick,dashed] (-2.5,0.75)--(2.5,1.25);
\end{tikzpicture}
\end{center}
\caption{Illustration of the proof of Theorem \ref{satzEigenschaftenBesteApprox} (c), case 2.1.2.1.}
\label{fig11}
\end{figure}
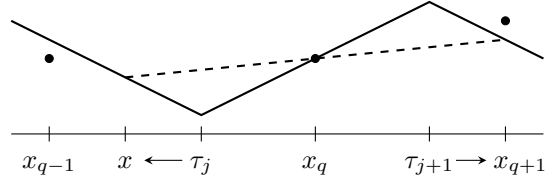

\vspace{0.2in}
\underline{Case 2.1.2.2:} The case $\varphi(x_{q+1}) = s^\ast(x_{q+1})$ cannot occur since $\tau_{j+1}$ would then not be a proper knot of $s^\ast$.

\vspace{0.03in}
\underline{Case 2.1.2.3:} Consider the case \; $\varphi(x_{q+1}) < s^\ast(x_{q+1})$. \; Let $\vartheta$ denote the straight line connecting $\big(x_{q-1},s^\ast(x_{q-1})\big)$ and $\big(\tau_j,s^\ast(\tau_j)\big)$. Then the lines $\vartheta$ and $\psi$ have a point of intersection $x \in (\tau_j,x_q)$. We replace $s^\ast$ on $[\tau_j,x]$ by $\vartheta$ and on $(x,x_{q+1})$ by $\psi$. By this modification the knot $\tau_j$ is moved to $x$ and the knot $\tau_{j+1}$ to $x_{q+1}$. This reduces the number of interior knots by one while the approximation power remains unchanged - contradicting the assumption on the minimal number of interior knots.

\begin{figure}[h]
\begin{center}
\begin{tikzpicture}
\draw (-4,0)--(3,0);
\draw (-3.5,0.1)--(-3.5,-0.1) node[below=2pt] {$x_{q-1}$};
\draw (-2.5,0.1)--(-2.5,-0.1) node[below=2pt] {$\tau_j$};
\draw (-1.5,0.1)--(-1.5,-0.1) node[below=2pt] {$x$};
\draw (0,0.1)--(0,-0.1) node[below=2pt] {$x_q$};
\draw (1.5,0.1)--(1.5,-0.1) node[below=2pt] {$\tau_{j+1}$};
\draw (2.5,0.1)--(2.5,-0.1);
\draw (2.7,-0.1) node[below=2pt] {$x_{q+1}$};

\draw[->,>=stealth] (-2.25,-0.35)--(-1.75,-0.35);
\draw[->,>=stealth] (1.85,-0.35)--(2.25,-0.35);

\draw[fill] (0,1) circle (2.0pt);
\draw[fill] (-3.5,1.0) circle (2.0pt);
\draw[fill] (2.5,1.5) circle (2.0pt);

\draw[thick,dashed] (-2.5,0.75)--(-1.5,0.25)--(2.5,2.25);
\draw[thick] (-4,1.5)--(-2.5,0.75)--(1.5,1.15)--(3,2.8);
\end{tikzpicture}
\end{center}
\caption{Illustration of the proof of Theorem \ref{satzEigenschaftenBesteApprox} (c),  case 2.1.2.3.}
\label{fig12}
\end{figure}
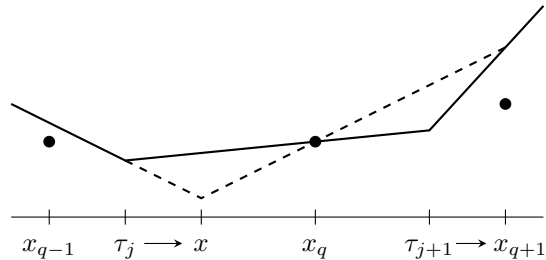

\vspace{0.2in}
\underline{Case 2.2:} The case $s^{\ast}\left(\tau_j\right) > \sigma\left(\tau_j\right)$ ist treated in analogy to case 2.1.

\vspace{0.03in}
\underline{Case 2.3:} The case $s^{\ast}\left(\tau_j\right) = \sigma\left(\tau_j\right)$ cannot occur because $\tau_j$ then would not be a proper knot and could therefore be omitted.

\vspace{0.03in}
\underline{Case 3:} We consider the case that at least two data abscissae lie between two interior knots $\tau_i$ and $\tau_j$, $i < j$, i.e.: $x_{q-1} < \tau_i < x_q < x_{q+1} < \ldots < x_{q'} < \tau_j < x_{q'+1}$ with $q < q'$. Then the data abscissae $x_q$ and $x_{q'}$ neighboring to $\tau_i$ and $\tau_j$ cannot be knots. Because otherwise we could replace $s^{\ast}$ on $\left[x_{q-1},x_q\right]$ by the straight line connecting $\left(x_{q-1},s^{\ast}\left(x_{q-1}\right)\right)$ and $\left(x_q,s^{\ast}\left(x_q\right)\right)$ and in a similar way on $\left[x_{q'},x_{q'+1}\right]$. We would thus receive a best approximation with less interior knots which contradicts our assumption. Thus there are at least two data abscissae which are not knots: $x_q$ and $x_{q'}$. This proves (c).

\vspace{0.2in}
To prove (d) we demonstrate that all other cases can be excluded:

\vspace{0.03in}
\underline{Case 1:} There is no data abscissa on or between neighboring knots $\tau_j < \tau_{j+1}$, i.e.: $x_q < \tau_j < \tau_{j+1} < x_{q+1}$. Then $\tau_j$ and $\tau_{j+1}$ must be interior knots and we can construct a best approximation with less interior knots then by replacing $s^\ast$ on $[x_q,x_{q+1}]$ by the straight line connecting $\big(x_q,s^\ast(x_q)\big)$ and $\big(x_{q+1},s^\ast(x_{q+1})\big)$. This contradicts the assumption that $s^\ast$ has the minimal number of interior knots.

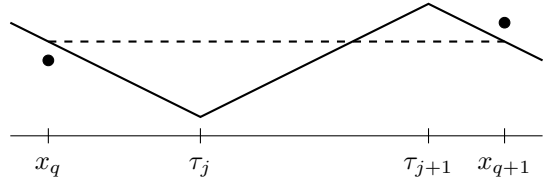
\begin{figure}[h]
\begin{center}
\begin{tikzpicture}
\draw (-4,0)--(3,0);
\draw (-3.5,0.1)--(-3.5,-0.1) node[below=2pt] {$x_q$};
\draw (-1.5,0.1)--(-1.5,-0.1) node[below=2pt] {$\tau_j$};
\draw (1.5,0.1)--(1.5,-0.1) node[below=2pt] {$\tau_{j+1}$};
\draw (2.5,0.1)--(2.5,-0.1) node[below=2pt] {$x_{q+1}$};

\draw[fill] (-3.5,1.0) circle (2.0pt);
\draw[fill] (2.5,1.5) circle (2.0pt);

\draw[thick] (-4,1.5)--(-1.5,0.25)--(1.5,1.75)--(3,1.0);
\draw[thick,dashed] (-3.5,1.25)--(2.5,1.25);
\end{tikzpicture}
\end{center}
\caption{Illustration of the proof of Theorem \ref{satzEigenschaftenBesteApprox} (d), case 1.}
\label{fig13}
\end{figure}

\vspace{0.2in}
\underline{Case 2:} There is exactly one data abscissa between $\tau_j$ and $\tau_{j+1}$.

\vspace{0.03in}
\underline{Case 2.1:} Consider the case $x_q = \tau_j < \tau_{j+1} < x_{q+1}$. We move $\tau_{j+1}$ to $x_{q+1}$ and replace $s^\ast$ on $[x_q,x_{q+1}]$ by the connecting line between $\big(x_q,s^\ast(x_q)\big)$ and $\big(x_{q+1},s^\ast(x_{q+1})\big)$. This diminishes the number of interior knots. Contradiction!

\vspace{0.03in}
\underline{Case 2.2:} The case $x_q < \tau_j < \tau_{j+1} = x_{q+1}$ is treated in analogy to case 2.1.

\vspace{0.03in}
\underline{Case 2.3:} The case $x_{q-1} < \tau_j < x_q < \tau_{j+1} < x_{q+1}$ cannot occur because of (c).

\vspace{0.03in}
Thus, the assertion in (d) remains as the only possibility.

\vspace{0.2in}
Proof of (e): We obviously have: $\tau_0 \neq \tau_j \neq \tau_k$. As data abscissae neighboring to $\tau_j$ the abscissae $x_q$ and $x_{q+1}$ cannot be knots according to (b).

\vspace{0.2in}
Proof of (f): Let $\tau_j$ be an interior knot of $s^\ast$. Because of (a) $\tau_j\in(x_1,x_\mu)$ holds and thus there exists a $q$ with
\begin{equation*}
x_0 < x_1 < \ldots < x_q < \tau_j < x_{q+1} < \ldots < x_\mu < x_{\mu+1} \quad.
\end{equation*}
Because of (b) $x_q$ and $x_{q+1}$ cannot be knots which implies
\begin{equation*}
x_0,x_q\in(-\infty,\tau_j) \quad\quad\text{and}\quad\quad x_{q+1},x_{\mu+1}\in(\tau_j,\infty) \quad.
\end{equation*}

\vspace{0.2in}
To prove (g) let $p < \infty$ hold. Because of (d) at least one data abscissa lies strictly between $\tau_j$ and $\tau_{j+1}$. Let there be exactly one $x_q$ with $\tau_j < x_q < \tau_{j+1}$.

\vspace{0.03in}
\underline{Case 1:} Consider the case that $\tau_j$ is the interior and $\tau_{j+1}$ the data knot. Then there is at least one data abscissa to the left of $\tau_j$. Denote by $\vartheta$ the line connecting the points $\big(\tau_j,s^\ast(\tau_j)\big)$ and $(x_{q-1},s^\ast(x_{q-1})\big)$. Assume the data point $(x_q,f_q)$ would not be reproduced exactly by $s^\ast$, i.e. $s^\ast(x_q) \neq f_q$. For small $\varepsilon > 0$ let $\psi_{+}$ be the straight line connecting $\big(\tau_j+\varepsilon,\vartheta(\tau_j+\varepsilon)\big)$ and $\big(\tau_{j+1},s^\ast(\tau_{j+1})\big)$ and $\psi_{-}$ the line connecting $\big(\tau_j-\varepsilon,\vartheta(\tau_j-\varepsilon)\big)$ and $\big(\tau_{j+1},s^\ast(\tau_{j+1})\big)$. Furthermore, we define
\begin{equation*}
s^\ast_{+}(x) \;\;:=\;\; \left\{
\begin{array}{lcl}
\vartheta(x) &,& x \in \big(x_{q-1},\tau_j+\varepsilon\big] \\
\psi_{+}(x) &,& x \in \big(\tau_j+\varepsilon,\tau_{j+1}\big] \\
s^\ast(x) &,& \text{otherwise} \hspace{2.0cm}\;
\end{array}
\right.
\end{equation*}
and 
\begin{equation*}
s^\ast_{-}(x) \;\;:=\;\; \left\{
\begin{array}{lcl}
\vartheta(x) &,& x \in \big(x_{q-1},\tau_j-\varepsilon\big] \\
\psi_{-}(x) &,& x \in \big(\tau_j-\varepsilon,\tau_{j+1}\big] \\
s^\ast(x) &,& \text{otherwise} \hspace{2.0cm}.
\end{array}
\right. 
\end{equation*}
Then for sufficiently small $\varepsilon > 0$ and dependent on the position of $s^\ast(x_q)$ (see Fig. \ref{fig14} and \ref{fig15}) $s^\ast_{+}$ or $s^\ast_{-}$ is a better approximation to the data than $s^\ast$ - a contradiction since $s^\ast$ is a best approximation. 

\vspace{0.02in}
So if there ist exactly one $x_q$ lying between $\tau_j$ and $\tau_{j+1}$, i.e. $\tau_j < x_q < \tau_{j+1}$, then $x_q$ is reproduced by $s^\ast$, i.e. $s^\ast(x_q) = f_q$.

\begin{figure}[h]
\begin{center}
\begin{tikzpicture}
\draw (-4,0)--(3,0);
\draw (-3.5,0.1)--(-3.5,-0.1) node[below=2pt] {\small $x_{q-1}$};
\draw (-2.5,0.1)--(-2.5,-0.1) node[below=2pt] {\small $\tau_j$};
\draw (-1.5,0.1)--(-1.5,-0.1);
\draw (-1.3,-0.1) node[below=2pt] {\small $\tau_j+\varepsilon$};
\draw (0,0.1)--(0,-0.1) node[below=2pt] {\small $x_q$};
\draw (1.75,-0.1) node[below=2pt] {\small $\tau_{j+1}=$};
\draw (2.5,0.1)--(2.5,-0.1);
\draw (2.7,-0.1) node[below=2pt] {\small $x_{q+1}$};

\draw[->,>=stealth] (-2.25,-0.35)--(-1.75,-0.35);

\draw[fill] (0,1) circle (2.0pt);
\draw[fill] (-3.5,1.0) circle (2.0pt);
\draw[fill] (2.5,1.5) circle (2.0pt);

\draw[thick,dashed] (-2.5,0.75)--(-1.5,0.25)--(2.5,2.25);
\draw[thick] (-4,1.5)--(-2.5,0.75)--(2.5,2.25)--(3,2.8);
\draw (1,2.05) node {\small $s^\ast$};
\draw (1,1.2) node {\small $s^\ast_{+}$};
\end{tikzpicture}
\end{center}
\caption{Illustration of the proof of Theorem \ref{satzEigenschaftenBesteApprox} (g),  case 1.}
\label{fig14}
\end{figure}

\begin{figure}[h]
\begin{center}
\begin{tikzpicture}
\draw (-4,0)--(3,0);
\draw (-3.5,0.1)--(-3.5,-0.1) node[below=2pt] {$x_{q-1}$};
\draw (-2.5,0.1)--(-2.5,-0.1) node[below=2pt] {$\tau_j-\varepsilon$};
\draw (-1.5,0.1)--(-1.5,-0.1);
\draw (-1.4,-0.1) node[below=2pt] {$\tau_j$};
\draw (0,0.1)--(0,-0.1) node[below=2pt] {$x_q$};
\draw (1.75,-0.1) node[below=2pt] {$\tau_{j+1} = $};
\draw (2.5,0.1)--(2.5,-0.1);
\draw (2.7,-0.1) node[below=2pt] {$x_{q+1}$};

\draw[->,>=stealth] (-1.6,-0.35)--(-2.0,-0.35);

\draw[fill] (0,1.5) circle (2.0pt);
\draw[fill] (-3.5,1.0) circle (2.0pt);
\draw[fill] (2.5,1.5) circle (2.0pt);

\draw[thick,dashed] (-2.5,0.75)--(2.5,2.25);
\draw[thick] (-4,1.5)--(-2.5,0.75)--(-1.5,0.25)--(2.5,2.25)--(3,2.8);
\draw (1,2.05) node {\small $s^\ast_{-}$};
\draw (1,1.2) node {\small $s^\ast$};
\end{tikzpicture}
\end{center}
\caption{Illustration of the proof of Theorem \ref{satzEigenschaftenBesteApprox} (g),  case 1.}
\label{fig15}
\end{figure}

\vspace{0.2in}
\underline{Case 2:} In this case $\tau_j$ is the data knot and $\tau_{j+1}$ the interior knot. This case is treated in analogy to case 1.

\vspace{0.05in}
Thus, we have shown that one of the cases named in (g) occurs.

\vspace{0.2in}
\noindent{}Proof of (h): Since we are considering the case $r=k$ all knots, in particular all interior knots, are proper knots, i.e., the first derivatve has a jump discontinuity.

\vspace{0.2in}
\noindent{}In case $r=k$ ($s^\ast$ has $k$ proper knots) (a) to (h) are thus proven. 

\vspace{0.05in}
\noindent{}To complete the proof of Theorem \ref{satzEigenschaftenBesteApprox} we have to check the case $r < k$. First we show that $s^\ast$ cannot have (proper) interior knots then. To demonstrate this let us assume $\tau_j$ was an interior knot of $s^\ast$ and $x_q < \tau_j < x_{q+1}$. Replacing $s^\ast$ on $[x_q,x_{q+1}]$ by the straight line connecting $\big(x_q,s^\ast(x_q)\big)$ and $\big(x_{q+1},s^\ast(x_{q+1})\big)$ makes $x_q$ and $x_{q+1}$ to (new) knots while $\tau_j$ ceases to be a knot. We thus get a best approximation whith $r+1\leq k$ knots and with one \textit{inner} knot less than $s^\ast$. This contradicts the assumption of $s^\ast$ having a minimal number of interior knots. All knots $\tau_1,\ldots,\tau_r$ of $s^\ast$ are therefore data knots.

\vspace{0.1in}
Furthermore,
\begin{equation*}
M \;:=\; \big\{x_1,\ldots,x_{\mu}\big\}\setminus\big\{\tau_1,\ldots,\tau_r\big\}
\end{equation*}
contains at least $k-r+1$ elements since $\mu\geq k+1$. Now supplement the proper knots of $s^\ast$ with arbitrary points from $M$ to $k$ (proper and improper) knots. We claim that for these now $k$ knots $t_1 < t_2 < \ldots < t_k$ properties and assertions (a) to (g) hold. Assertion (a) holds because of the way the additional knots were chosen. Implications (b), (c) (e), (f), (g) and (h) hold because the assumptions cannot be fulfilled due to the lack of interior knots. Assertion (d) holds, because alle knots $t_1,\ldots,t_k$ are data knots - the proper and the improper ones.

\vspace{0.1in}
This completes the proof of Theorem \ref{satzEigenschaftenBesteApprox}. \hfill $\square$
\end{proof}

\vspace{0.3in}
With regard to the application of the propositions in this paper to the development of numerical methods, we hold that:
\begin{remark}\label{bemerkungInterpolation}
The statements in Lemma \ref{minimalfolgenlemma}, Theorem \ref{existenzsatz} and Theorem \ref{satzEigenschaftenBesteApprox} (a) to (g) also hold true for the case
\begin{equation*}
\inf_{s\in S^1_k[a,b]}\big\|F-s(X)\big\|_p \;\;=\;\; 0 \quad.
\end{equation*}

\vspace{0.1in}
For Lemma \ref{minimalfolgenlemma} and Theorem \ref{existenzsatz} this is obvious, for Theorem \ref{satzEigenschaftenBesteApprox} a review of the proof shows that nowhere it is assumed or required that the minimal approximation error be (strictly) positive.
\end{remark}

\vspace{0.2in}
\begin{remark}
As a review of the proof of Theorem \ref{satzEigenschaftenBesteApprox} shows the assumption \mbox{$\mu \geq k+1 \geq 2$} is needed only in case the minimal approximation error can already be realized with less than $k$ knots.
\end{remark}

\vspace{0.2in}
\setcounter{equation}{0}
\section{Summary and Conclusion}
\label{sec:summary}
We have proven that there exists a first-degree spline (broken line) best approximating a function $f$ on a discrete point set and derived additional important properties of at least one best approximation. A numerical procedure based on the existence proof and the additional properties is presented in \cite{cromme01}. 

\vspace{0.075in}
On the question of extending the results from Theorem \ref{satzEigenschaftenBesteApprox} to higher-degree splines: Ist seems not possible to transfer the method of proof applied here (of varying a best approximation in subintervalls) to higher-degree splines, because in that case it would be necessary to preserve not just the continuity of the spline but also the continuity of the first and possibly higher derivatives during this modification.

\vspace{0.2in}

\end{document}